\documentclass[reqno, 11pt, letterpaper ]{amsart}

\usepackage{amsmath}
\usepackage{amsfonts}
\usepackage{amssymb}
\usepackage{graphicx}
\usepackage{bbm}
\usepackage{amsthm,color,yfonts,cite} \usepackage{paralist}
\usepackage{hyperref}
\usepackage{physics}
\usepackage{dsfont}

\newtheorem{theorem}{Theorem}

\newtheorem{proposition}[theorem]{Proposition}
\newtheorem{lemma}[theorem]{Lemma}

\theoremstyle{remark}
\newtheorem{remark}[theorem]{Remark}

\makeatletter
\newcommand*{\rom}[1]{\expandafter\@slowromancap\romannumeral #1@}
\makeatother

\newcommand{\ls}{\lesssim}

\newcommand{\la}{\langle}
\newcommand{\ra}{\rangle}
\newcommand{\R}{\mathbb{R}}

\newcommand{\Z}{\mathbb{Z}}
\newcommand{\pa}{\partial}
\newcommand{\ep}{\epsilon}

\usepackage{wrapfig}
\usepackage{tikz}
\usetikzlibrary{arrows,calc,decorations.pathreplacing}
\definecolor{light-gray1}{gray}{0.90}
\definecolor{light-gray2}{gray}{0.80}
\definecolor{light-gray3}{gray}{0.60}

\numberwithin{equation}{section}

\numberwithin{theorem}{section}

\numberwithin{table}{section}

\numberwithin{figure}{section}

\ifx\pdfoutput\undefined
  \DeclareGraphicsExtensions{.pstex, .eps}
\else
  \ifx\pdfoutput\relax
    \DeclareGraphicsExtensions{.pstex, .eps}
  \else
    \ifnum\pdfoutput>0
      \DeclareGraphicsExtensions{.pdf}
    \else
      \DeclareGraphicsExtensions{.pstex, .eps}
    \fi
  \fi
\fi

\title[Long-time validity of the KdV approximation]{Long-time validity of the Korteweg--de Vries approximation for the Boussinesq equation}

\date{\today}
\author[Y. Hong]{Younghun Hong}
\address{Department of Mathematics, Chung-Ang University, Seoul 06974, South Korea}
\email{yhhong@cau.ac.kr}

\author[J. Jang]{Junyeong Jang}
\address{Department of Mathematics, Chung-Ang University, Seoul 06974, South Korea}
\email{jyjang0119@cau.ac.kr}

\begin{document}

\begin{abstract}
We consider the Korteweg--de Vries (KdV) approximation for the good Boussinesq equation. Building upon the low-regularity local-in-time justification established by Hong and Yang \cite{HY2024}, we prove that this approximation remains valid on logarithmically long time intervals. The key ingredient is a persistence of regularity argument, which allows the local approximation to be iterated using the rescaled conservation law for the Boussinesq equation.
\end{abstract}

\maketitle

\section{Introduction}

\subsection{Motivation}

Modulation approximation captures the leading-order dynamics of a complicated dynamical system through a reduced equation in an appropriate asymptotic regime. Such reduced equations are often derived formally by substituting an asymptotic ansatz into the original equation and collecting the leading-order terms. These reduced equations are referred to as modulation equations. This approach has become a fundamental tool for understanding multiscale PDE models and the underlying physical phenomena through simpler effective equations \cite{SU2017, Abl2011}.

To justify a modulation approximation rigorously, one must establish quantitative error estimates between the solutions of the original equation and the corresponding modulation equation over an appropriate time interval. More precisely, one seeks to identify the class of initial data for which the approximation can be rigorously justified and the time interval over which it remains mathematically valid. Considerable progress has been made in this direction; we refer to the monograph \cite{SU2017} for the general theory and to \cite{Cra1985, Dul2012, SW2000, SW2002, Sch1998, GP2014, Han2013, SW2000-2} for representative examples. Nevertheless, existing justification results often require relatively high regularity or additional assumptions on the initial data and, in many cases, establish the approximation only on local time intervals. These limitations motivate the present work.

In this paper, we are interested in extending the validity of modulation approximations in two closely related directions. The first direction is to broaden the class of solutions for which the approximation can be rigorously justified. For example, the Korteweg--de Vries (KdV) equation is known to be a universal model, as it arises as a modulation equation for a wide variety of dispersive systems, including the full water wave problem in the shallow-water regime \cite{Cra1985, Lan2013, Dul2012, SW2000, SW2002, Sch1998}. Motivated both by the presence of rough waves in physical applications and by the development of low-regularity well-posedness theory for the KdV equation \cite{Bou1993-2, KPV1996, KPV1991, KPV1993-1, KPV1993-2, CKSTT2003-1, KV2019, KT2006}, it is natural to seek rigorous justification of the modulation approximation for rough solutions. The second direction is to extend the interval of validity by exploiting conservation laws. In many situations, however, the available conservation laws provide only low-regularity control, making them insufficient for directly iterating a local approximation established at a higher regularity level. Therefore, refining the local theory so that its lifespan is determined only by the lower norm controlled by the conservation law is essential for extending the validity to longer time scales.

Motivated by this observation, we show that once the local theory is refined so that the lifespan depends only on the lower norm controlled by the conservation law, higher Sobolev regularity can be propagated on the same interval and enters only through the local error. This allows the local approximation to be iterated and yields logarithmically long-time validity for the Boussinesq--KdV approximation.

\subsection{The Boussinesq--KdV approximation}

To illustrate the main ideas while avoiding unnecessary technical complications, we use the Boussinesq--KdV approximation as a model problem. Specifically, we consider the \emph{good} Boussinesq equation
\begin{equation}\label{eq: Boussinesq}
\pa_t^2u
=\pa_x^2u-\pa_x^4u-\pa_x^2(u^2),
\end{equation}
where $u=u(t,x):I(\R)\times\R\to\R$. This equation is variant of the classical equation introduced by Boussinesq in his study of shallow water waves \cite{Bou1872}. In the long-wave regime, its leading-order dynamics are described by two counter-propagating KdV equations.

More precisely, for $0<\ep\ll1$, we seek a long-wave solution of the form
\begin{equation}\label{eq: long wave scaling}
u(t,x)=\epsilon^2 u_\ep(\epsilon^3t,\epsilon x).
\end{equation}
Substituting \eqref{eq: long wave scaling} into \eqref{eq: Boussinesq}, we find that $u_\ep$ satisfies the rescaled Boussinesq equation
\begin{equation}\label{eq: rescaled Boussinesq}
\ep^4\pa_t^2u_\ep
=
\pa_x^2(1-\ep^2\pa_x^2)u_\ep
-
\ep^2\pa_x^2(u_\ep^2).
\end{equation}
A formal analysis (see Section~\ref{subsec: Derivation of the coupled Boussinesq system}) shows that the solution decomposes into two counter-propagating waves,
\begin{equation}\label{decomposition relation}
u_\ep(t,x)
=
u_\ep^+\big(t,x-\tfrac{t}{\ep^2}\big)
+
u_\ep^-\big(t,x+\tfrac{t}{\ep^2}\big),
\end{equation}
where the profiles $u_\ep^\pm$ satisfy the rescaled coupled Boussinesq system
\begin{equation}\label{eq: rescaled coupled Boussinesq system}
\pa_tu_{\ep}^{\pm}(t,x)
=
\pm
\frac{\pa_x^3}{\sqrt{1-\ep^2\pa_x^2}+1}
u_\ep^\pm(t,x)
\pm
\frac{1}{2}
\frac{\pa_x}{\sqrt{1-\ep^2\pa_x^2}}
(u_\ep^2)\bigg(t,x\pm\frac{t}{\ep^2}\bigg),
\end{equation}
where $\sqrt{1-\ep^2\pa_x^2}$ denotes the Fourier multiplier with symbol $\sqrt{1+\ep^2\xi^2}$. Then, in the long-wave limit $\ep\to0$, each profile $u_\ep^\pm$ formally converges to a solution of the KdV equation
\begin{equation}\label{eq: KdV}
2\pa_tw^\pm\mp\pa_x^3w^\pm\mp2w^\pm\pa_xw^\pm
=0.
\end{equation}
Thus, solutions of the rescaled Boussinesq equation \eqref{eq: rescaled Boussinesq} are formally approximated by the superposition of two decoupled KdV waves:
$$
u_\ep(t,x)
\underset{\ep\to0}{\approx}
w^+\big(t,x-\tfrac{t}{\ep^2}\big)
+
w^-\big(t,x+\tfrac{t}{\ep^2}\big).
$$
This Boussinesq--KdV approximation was rigorously justified by Schneider \cite{Sch1998} under high-regularity assumptions. More recently, Hong and Yang \cite{HY2024} refined this result by establishing a local-in-time justification for initial data with regularity only slightly above $L^2$, while simultaneously removing the weighted norm assumption.

Besides its physical relevance, this model problem is particularly well suited to illustrating our approach to extending the interval of validity in situations where the available conservation laws provide only low-regularity control. In contrast, a typical example in which conservation laws can be straightforwardly exploited is the Benjamin--Bona--Mahony (BBM) equation \cite{BBM1972, BPS1983, Olver1979}. For the BBM equation, the conserved quantities yield $\ep$-uniform $H^1$ bounds, while the regularity requirement for the rigorous justification of the BBM approximation can be reduced to $H^1$. Consequently, the local approximation can be iterated to obtain an exponential-in-time error estimate \cite{HJY2025}. This illustrates that lowering the regularity requirement to match the level of the available conservation laws is not merely a technical improvement, but is also essential for extending the interval of validity.

One of the major obstructions in the rigorous justification of long-wave approximations is that the available conservation laws often fail to provide uniform control of Sobolev norms after the long-wave scaling. This phenomenon is not specific to the Boussinesq equation. Similar difficulties also arise in several other long-wave approximation problems; for example, in the derivation of the KdV equation from the Fermi--Pasta--Ulam system \cite{HKY2021}. Nevertheless, the Boussinesq equation provides a particularly simple setting in which this phenomenon can be clearly illustrated.

Indeed, the Boussinesq equation \eqref{eq: Boussinesq} conserves the energy (see \cite{Lin1993})
\begin{equation}\label{eq: energy for Boussinesq}
\mathcal{E}_0[u]
=
\frac12\int_\R
\Big\{
u^2
+
(\partial_xu)^2
+
(\pa_x^{-1}\partial_tu)^2
\Big\}\,dx
-
\frac13\int_\R u^3\,dx.
\end{equation}
Under the long-wave scaling \eqref{eq: long wave scaling}, the rescaled solution $u_\ep$ satisfies the conservation law
\begin{equation}\label{eq: energy for rescaled Boussinesq}
\mathcal{E}_\ep[u_\ep]
=
\frac12\int_\R
\Big\{
u_\ep^2
+
\ep^2(\partial_xu_\ep)^2
+
\ep^4(\pa_x^{-1}\partial_tu_\ep)^2
\Big\}\,dx
-
\frac{\ep^2}{3}\int_\R u_\ep^3\,dx,
\end{equation}
Since the derivative terms are multiplied by powers of $\ep$, the conserved energy provides only $L^2$ control in the small-$\ep$ regime and therefore does not yield a uniform $H^1$ bound. Hence, unlike the BBM equation, the local approximation cannot be iterated directly using the conservation law alone.

For this reason, the Boussinesq--KdV approximation provides a particularly transparent setting in which to develop and illustrate a persistence-of-regularity approach for extending the interval of validity in situations where the available conservation laws provide only low-regularity control.

\subsection{Main result}

Our main theorem establishes an exponential-in-time $L^2$ error estimate for the Boussinesq--KdV approximation in a low-regularity setting by combining a persistence of regularity argument with the rescaled conservation law. As a consequence, the interval of validity extends to logarithmically long time scales.

\begin{theorem}[Exponential-in-time KdV approximation for the Boussinesq equation]\label{thm: main theorem}
Let $0<s\le5$. Suppose that
$$
\sup_{0<\ep\leq 1}\sum_\pm
\big\|\la\ep\pa_x\ra u_{\ep,0}^\pm\big\|_{H_x^s}
<\infty\quad\textup{and}\quad \sum_\pm\|w_0^\pm\|_{H_x^s}<\infty,
$$
and let $u_\ep^\pm(t)\in C(\R;H_x^s)$ (resp. $w^\pm(t)\in C(\R;H_x^s)$) denote the unique solution to the rescaled coupled Boussinesq system \eqref{eq: rescaled coupled Boussinesq system} (resp. the KdV equation \eqref{eq: KdV}) with initial data $u_{\ep,0}^\pm$ (resp. $w_0^\pm$). Then there exist $\ep_0\in(0,1]$ and constants $K_0,K_s>0$ such that for every $\ep\in(0,\ep_0]$ and $t\in\mathbb{R}$,
\begin{equation}\label{eq: convergence bound}
\|u_\ep^\pm(t)-w^\pm(t)\|_{L_x^2}
\leq
e^{K_0|t|}
\left(K_0\|u_{\ep,0}^\pm-w_0^\pm\|_{L_x^2}
+
K_s\ep^{\min\{\frac{2s}{5},\frac{1}{2}\}}\right).
\end{equation}
\end{theorem}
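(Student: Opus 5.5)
The plan is to iterate the local-in-time result of Hong and Yang \cite{HY2024} on unit-length intervals, using the rescaled energy \eqref{eq: energy for rescaled Boussinesq} to control the step length and a persistence-of-regularity bound to control the local error. The first step is to refine the local theory so that, for the coupled system \eqref{eq: rescaled coupled Boussinesq system} and for KdV \eqref{eq: KdV}, the existence time $\delta$ depends only on an $L^2$-type quantity. Concretely, $\delta$ should depend on $\sum_\pm\|u_\ep^\pm\|_{L^2}$ (together with $\ep\|\pa_x u_\ep\|_{L^2}$ if needed), which the conserved energy $\mathcal{E}_\ep$ bounds uniformly in $\ep$ for small data, or more generally via Gagliardo--Nirenberg applied to the cubic term $\ep^2\int u_\ep^3$. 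For KdV the $L^2$ norm is conserved exactly. On such an interval I will establish a persistence bound of the form
\[
\sup_{|t-t_0|\le\delta}\|\la\ep\pa_x\ra u_\ep^\pm(t)\|_{H^s}\le C\big(\|u_\ep(t_0)\|_{L^2}\big)\|\la\ep\pa_x\ra u_\ep^\pm(t_0)\|_{H^s}.
\]
The argument is the standard one: in the Bourgain-space bilinear estimates, the high-frequency factor is placed in $H^s$ and the other in $L^2$ (or slightly above). The estimate then becomes linear in the $H^s$ norm, and its constant depends only on the low norm.

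Next, on each interval $[n\delta,(n+1)\delta]$ I apply the local difference estimate of \cite{HY2024}, re-centred at $t_0=n\delta$:
\[
\|u_\ep^\pm(t)-w^\pm(t)\|_{L^2}\le C_0\|u_\ep^\pm(t_0)-w^\pm(t_0)\|_{L^2}+C\,\ep^{\min\{2s/5,1/2\}}\,M_s(t_0),
\]
where $M_s(t_0)$ collects the $H^s$ norms of both solutions at $t_0$. The rate comes from interpolation: the nonresonant (counter-propagating) interaction terms gain powers of $\ep^2$ via normal forms or time averaging, while the multiplier differences $\frac{\pa_x^3}{\sqrt{1-\ep^2\pa_x^2}+1}-\frac{\pa_x^3}{2}$ cost derivatives. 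Truncating at frequency $\ep^{-a}$ and optimizing against the tail bound $N^{-s}$ gives the exponent $\min\{2s/5,1/2\}$. Two features must be checked in this step. First, $C_0$ must depend only on the $L^2$ sizes, which are uniformly bounded for all times. Second, the time origin must enter only through the phases $x\pm t/\ep^2$; these are harmless because the estimate is uniform in translations.

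The main obstacle is that $M_s(n\delta)$ may grow. Iterating the persistence bound gives at most $M_s(n\delta)\le C^n M_s(0)$, which is exponential in $t$ and therefore fine, since it only sits in front of $\ep^{\min\{2s/5,1/2\}}$. The delicate point is to ensure that the growth constant $C$ and the step $\delta$ depend only on $\ep$-uniform $L^2$ quantities. If this is not clean for the Boussinesq flow because of the $\ep^2\int u^3$ term, I would first try to prove the a priori $L^2$ bound through energy conservation, restricting to $\ep\le\ep_0$ so that the cubic term is absorbed.

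With uniform step and growth constants in hand, the recursion reads
\[
e_{n+1}\le C_0e_n+C\ep^{\alpha}C^nM_s(0),
\]
and it sums to
\[
e_n\le C_0^n e_0+n\,\tilde C^n\ep^\alpha.
\]
Writing $n\approx|t|/\delta$ turns the factors $C_0^n$ and $n\tilde C^n$ into $e^{K_0|t|}$, which yields \eqref{eq: convergence bound}. Negative times are handled by time reversibility.
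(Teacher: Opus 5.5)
Your proposal is correct and follows essentially the same route as the paper. Refined bilinear estimates with the $H^s$ weight on a single factor give an $L^2$-determined lifespan (persistence of regularity), the rescaled energy with $\ep\le\ep_0$ gives uniform $L^2$ control and hence at most exponential $H^s$ growth, and the re-centred local approximation is then iterated with a uniform step and summed. One correction to your heuristic for the rate: in the paper, the cap $\ep^{1/2}$ comes from the Bourgain-space bilinear estimate \eqref{ineq: bilin est for Boussinesq 2} for the counter-propagating term $e^{\pm\frac{2t}{\ep^2}\pa_x}(u_\ep^\mp)^2$, not from normal forms or a truncation optimization, while $\ep^{\frac{2s}{5}}$ comes from the cutoff $N_0=\frac12\ep^{-2/5}$ in the frequency-localized auxiliary equation.
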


\begin{remark}[Dependence of the constants]
The constants $K_0$ and $K_s$ in Theorem~\ref{thm: main theorem} are independent of $\ep$ and $t$. For fixed $s$, the exponential rate $K_0$ is determined only by the lower-regularity bounds 
$$
\sup_{0<\ep\leq 1}\sum_\pm\|\la\ep\pa_x\ra u_{\ep,0}^\pm\|_{L_x^2}
\quad
\textup{and}
\quad
\sum_\pm\|w_0^\pm\|_{L_x^2},
$$
whereas $K_s$ may additionally depend on the $H^s$-bounds in the hypothesis. The key point for the iteration is that the local lifespan depends only on these lower-regularity bounds and not on the higher Sobolev bounds. The role of this distinction in the iteration argument is explained in Section~\ref{subsubsec: persistence of regularity} below.
\end{remark}

\begin{remark}[Logarithmic time validity]
Theorem~\ref{thm: main theorem} implies that the KdV approximation remains valid on time intervals satisfying
$$
|t|
\ll
\frac{1}{K_0}
\log\left(\frac{1}{K_0\|u_{\ep,0}^\pm-w_0^\pm\|_{L_x^2}+K_s\ep^{\min\{\frac{2s}{5},\frac{1}{2}\}}}\right).
$$
In particular, if
$$
\|u_{\ep,0}^\pm-w_0^\pm\|_{L_x^2}
\ls
\ep^{\min\{\frac{2s}{5},\frac{1}{2}\}},
$$
for example, when $u_{\ep,0}^\pm=w_0^\pm$, then
$$
\lim_{\ep\to0}
\sup_{|t|\le c\log(1/\ep)}
\|u_\ep^\pm(t)-w^\pm(t)\|_{L_x^2}=0,
$$
for every
$0<c< \frac{1}{K_0} \min\{\frac{2s}{5},\frac{1}{2}\}$. Thus, in the rescaled time variable, the interval on which the KdV approximation remains valid grows at least logarithmically as $\ep\to0$, improving the local-in-time justification in Hong--Yang \cite{HY2024}.
\end{remark}

\begin{remark}
The argument developed in this paper provides a robust framework for modulation approximation problems in which the available conservation laws give only lower-regularity control and therefore do not directly allow one to iterate a local approximation established at a higher regularity level. The key step is to refine the local theory so that the lifespan is determined solely by the lower norm controlled by the conservation law. Higher Sobolev regularity can then be propagated on the same time interval and enters only through the size of the local error. This makes it possible to restart the local approximation uniformly in time and thereby extend the interval of validity. The Boussinesq--KdV approximation serves as a simple model problem illustrating this mechanism.
\end{remark}

Finally, scaling back, we obtain the following formulation in the original variables.
\begin{remark}[Long-wave approximation for the Boussinesq equation]\label{rmk: reformulation of main theorem}
Let $0<s\leq 5$, and suppose that
$$
\sum_\pm
\sup_{0<\ep\leq 1}
\|\la\ep\pa_x\ra u_{\ep,0}^\pm\|_{H_x^s}
<\infty.
$$
For each $\ep\in (0,1]$, let $U_\ep(t)\in C(\R;H_x^s)$ be the solution to the Boussinesq equation \eqref{eq: Boussinesq} with initial data
$$
\Big(U_\ep(0,x), \, \,  \pa_tU_\ep(0,x)\Big)
=
\Big(\ep^2(u_{\ep,0}^++u_{\ep,0}^-)(\ep x),\, \,
\ep^2\pa_x\sqrt{1-\pa_x^2}\big
((u_{\ep,0}^--u_{\ep,0}^+)(\ep x)\big)\Big),
$$
and $W_\ep^\pm(t)\in C(\R;H_x^s)$ denote the solution to the KdV equation \eqref{eq: KdV} with initial data $u_{\ep,0}^\pm$. Then there exist $\ep_0\in(0,1]$ and $K_0,K_s>0$, such that for every $\ep\in (0,\ep_0]$ and $T>0$,
$$
\sup_{|t|\leq T/\ep^3}
\bigg\|U_\ep(t,\, x)
-\ep^2\sum_\pm W_\ep^\pm\big(\ep^3t, \, \ep(x\mp t)\big)
\bigg\|_{L_x^2}
\leq
K_s
\ep^{\frac{3}{2}+\min\{\frac{2s}{5},\frac{1}{2}\}}
e^{K_0T}.
$$
\end{remark}

\subsection{Ideas of the proof}
The proof combines the Fourier analytic approach of Hong and Yang \cite{HY2024} with a persistence of regularity argument.

\subsubsection{Fourier analytic approach}

For dispersive equations, modulation approximations can often be justified under low-regularity assumptions by exploiting the Fourier analytic techniques developed by Kenig--Ponce--Vega \cite{KPV1996}, together with the Bourgain space framework introduced in \cite{Bou1993-1, Bou1993-2}. This approach was first applied to the KdV approximation for the Fermi--Pasta--Ulam system by Hong--Kwak--Yang \cite{HKY2021}, where the regularity requirement in the earlier justification result of Schneider--Wayne \cite{SW2000-2} was lowered. Similarly, Fourier analytic techniques have also been successfully applied to several approximation problems, including continuum limits for discrete equations and related modulation approximation problems; see, for example, \cite{HJY2025, HKY2021, HY2024, HKNY2021, HKY2023, HY2019, KY2025}.

\subsubsection{Persistence of regularity}\label{subsubsec: persistence of regularity}

For the KdV approximation of the Boussinesq equation, Hong and Yang \cite{HY2024} employed this Fourier analytic approach to lower the regularity requirement. However, as discussed above, unlike the BBM case, the resulting local approximation cannot be iterated directly using the conservation law alone.

In the present paper, we overcome this obstruction by refining the local approximation in \cite{HY2024} through a persistence of regularity argument. The key point is that the local lifespan depends only on the lower $L^2$ norm, while higher $H^s$ regularity is propagated on the same interval. Since the rescaled conservation law provides uniform $L^2$ control, the refined local approximation can therefore be iterated with a uniform time step.

The argument relies on two simple observations. First, the nonlinear estimates can be refined so that the higher-order derivatives are placed on a single factor (see Lemma~\ref{lem: bilinear estimates} and Remark~\ref{remark: refinement of bilinear estimates}). This follows from the elementary inequality
\begin{equation}\label{ineq: fractional Leibniz rule}
\bigg\langle\sum_{j=1}^m\xi_j\bigg\rangle^s
\leq
C_{s,m}
\sum_{l=1}^m
\langle\xi_l\rangle^s
\end{equation}
for $s\geq0$. Second, the closed ball, defined by the higher Sobolev norm but equipped with the lower-regularity metric, is complete (see Lemma~\ref{lem: Completeness of the fixed point space} and \eqref{eq: contraction mapping space}). This allows the Banach fixed point theorem to be applied at the lower-regularity level while simultaneously propagating higher Sobolev regularity. 

Although the higher Sobolev norms may grow during the iteration, Proposition~\ref{prop: Hs norm bound} shows that this growth is at most exponential-in-time. Thus, the local approximation can be iterated to obtain the exponential-in-time error estimate stated in Theorem~\ref{thm: main theorem}. We have also employed this strategy in related settings \cite{HJ2026-1, HJ2026-2}.

\subsection{Outline of the paper}

The rest of the paper is organized as follows. In Section~\ref{sec: Preliminaries}, we derive the coupled Boussinesq system, introduce the associated Bourgain spaces, and review their basic properties. In Section~\ref{sec: Uniform bounds for the Boussinesq system}, we refine the bilinear estimates to establish local uniform bounds with persistence of regularity and derive exponential-in-time $H^s$ bounds from the uniform $L^2$ control. Finally, in Section~\ref{sec: KdV Approximation and Global Extension}, we refine the local KdV approximation via persistence of regularity and iterate the resulting estimate to prove the main theorem.

\subsection{Notation}
Throughout the paper, $\ep\in (0,1)$ denotes a small parameter. For two nonnegative quantities $A$ and $B$ we denote $A\ls B$ if $A\leq CB$ for some constant $C>0$, independent of $\ep$, and $A\sim B$ if $A\ls B$ and $B\ls A$. The Japanese bracket is defined by
$$
\la x\ra:=\sqrt{1+|x|^2}.
$$
Let $\hat{f}$ denote the Fourier transform of $f$. We write $|\pa_x|$ and $\la\ep\pa_x\ra$ for the Fourier multipliers with symbols $|\xi|$ and $\la\ep\xi\ra$, respectively, and denote by $e^{a\pa_x}$ the translation operator,
$$
e^{a\pa_x}f(x)=f(x+a).
$$
For $N>0$, we denote by $P_{\le N}$ the sharp frequency projection defined by
$$
\widehat{P_{\leq N}f}(\xi)
=
\mathbf{1}_{\{|\xi|\leq N\}}
\hat{f}(\xi).
$$
Finally, for $T>0$, we define the smooth time cut-off $\eta_T(t)$ by
\begin{equation}\label{eq: smooth cutoff}
\eta_T(t):=\eta(t/T)
\quad \textup{where}\quad
0\leq\eta\leq1, \quad \eta(t)\equiv 1 \,\textup{ on }[-1,1],\quad\operatorname{supp}\eta\subset[-2,2].
\end{equation}

\subsection{Acknowledgements}
This work was supported by the National Research Foundation of Korea (NRF) grant funded by the Korean government (MSIT) (No. RS-2023-00219980 and RS-2026-25479401).

\section{Preliminaries}\label{sec: Preliminaries}

In this section, we develop the analytical framework for the low-regularity justification of the KdV approximation. In Section~\ref{subsec: Derivation of the coupled Boussinesq system}, starting from the long-wave ansatz \eqref{eq: long wave ansatz for Boussinesq}, we derive the coupled Boussinesq system in integral form \eqref{eq: coupled Boussinesq integral form}, which formally converges to the two decoupled KdV equations \eqref{eq: KdV}. This integral formulation is essential for the low-regularity analysis, since a direct comparison at the level of the differential equations would require higher regularity, particularly for the convergence of the linear parts \cite[Chapter~12]{SU2017}. In Section~\ref{subsec: Bourgain spaces}, we introduce the Bourgain spaces associated with the linear Boussinesq flow and recall the estimates that will be used throughout the paper.

\subsection{Derivation of the coupled Boussinesq system}\label{subsec: Derivation of the coupled Boussinesq system}

For completeness, we briefly recall the derivation of the coupled Boussinesq system \eqref{eq: coupled Boussinesq integral form} following \cite{HY2024}. Starting from the long-wave ansatz, we derive an equivalent integral formulation by rescaling the equation, diagonalizing the linear part into two counter-propagating modes, and factoring out the leading-order transport. This integral formulation is better suited to the low-regularity analysis and will serve as the starting point for the Bourgain space estimates developed in the remainder of the paper.

Starting from the long-wave ansatz
\begin{equation}\label{eq: long wave ansatz for Boussinesq}
u(t,x)
=
\ep^2u_\ep^+(\ep^3t,\ep(x-t))
+
\ep^2u_\ep^-(\ep^3t,\ep(x+t)),
\end{equation}
we define
\begin{equation}\label{eq: rescaled u}
u_\ep(t,x):=\frac{1}{\ep^2}
u\bigg(\frac{t}{\ep^3}, \frac{x}{\ep}\bigg).
\end{equation}
Substituting \eqref{eq: rescaled u} into \eqref{eq: Boussinesq} yields the rescaled equation
\begin{equation}\label{eq: rescaled equation}
\left\{\begin{aligned}
\ep^4\pa_t^2u_\ep
&=
\pa_x^2\la\ep\pa_x\ra^2u_\ep
-\ep^2\pa_x^2(u_\ep^2),\\
u_\epsilon(0)&=u_{\epsilon,0},\\
\partial_tu_\epsilon(0)&=u_{\epsilon,1}.
\end{aligned}\right.
\end{equation}
To rewrite the equation as a first-order system, we introduce
$$
\mathbf{y}:=
\begin{bmatrix}
u_\ep
\\
\frac{\ep^2}{\pa_x\la\ep\pa_x\ra}\pa_t u_\ep
\end{bmatrix},
\qquad
\mathbf{M}:=
\frac{1}{\ep^2}\pa_x\la\ep\pa_x\ra
\begin{bmatrix}
0 & 1\\
1 & 0
\end{bmatrix},
\qquad
\mathbf{N}:=
-\frac{\pa_x}{\la\ep\pa_x\ra}(u_\ep^2)
\begin{bmatrix}
0\\
1
\end{bmatrix},
$$
so that \eqref{eq: rescaled equation} becomes
\begin{equation}\label{eq: first order system}
\pa_t\mathbf{y}
=
\mathbf{M}\mathbf{y}+\mathbf{N}.
\end{equation}
To diagonalize the linear part, we define
\begin{equation}\label{eq: transformed variable}
\begin{bmatrix}
z_\ep^-\\
z_\ep^+
\end{bmatrix}
:=
\mathbf{U}\mathbf{y}
=
\frac{1}{\sqrt{2}}
\begin{bmatrix}
u_\ep + \frac{\ep^2}{\pa_x\la\ep\pa_x\ra}\pa_t u_\ep
\\
u_\ep - \frac{\ep^2}{\pa_x\la\ep\pa_x\ra}\pa_t u_\ep
\end{bmatrix},
\qquad
\mathbf{U}=
\frac{1}{\sqrt{2}}
\begin{bmatrix}
1 & 1\\
1 & -1
\end{bmatrix}.
\end{equation}
Since $\mathbf{U}^{-1}=\mathbf{U}$, the linear operator is diagonalized as
$$
\partial_t
\begin{bmatrix}
z_\ep^-\\
z_\ep^+
\end{bmatrix}
=
\frac{1}{\ep^2}
\pa_x\la\ep\pa_x\ra
\begin{bmatrix}
1&0\\
0&-1
\end{bmatrix}
\begin{bmatrix}
z_\ep^-\\
z_\ep^+
\end{bmatrix}
+
\frac{1}{\sqrt2}
\frac{\pa_x}{\la\ep\pa_x\ra}(u_\ep^2)
\begin{bmatrix}
-1\\1
\end{bmatrix},
$$
or equivalently,
\begin{equation}\label{eq: diagonalized system}
\pa_tz_\ep^\pm
=
\mp\frac{1}{\ep^2}
\pa_x\la\ep\pa_x\ra z_\ep^\pm
\pm
\frac{1}{\sqrt2}
\frac{\pa_x}{\la\ep\pa_x\ra}(u_\ep^2).
\end{equation}
Motivated by the long-wave ansatz \eqref{eq: long wave ansatz for Boussinesq}, we introduce
\begin{equation}\label{eq: definition of u epsilon}
u_\ep^\pm(t,x)
:=
\frac1{\sqrt2}
z_\ep^\pm\left(t,x\pm\frac{t}{\ep^2}\right),
\end{equation}
which yields the counter-propagating decomposition \eqref{decomposition relation}. This moving frame removes the dominant transport and isolates the dispersive dynamics. Substituting \eqref{eq: definition of u epsilon} into \eqref{eq: diagonalized system}, we obtain
\begin{equation}\label{eq: coupled Boussinesq}
\begin{aligned}
\pa_tu_{\ep}^{\pm}(t,x)
&=\mp\frac{\pa_x}{\ep^2}\big(\la\ep\pa_x\ra-1\big)u_\ep^\pm
\pm\frac12
\frac{\pa_x}{\la\ep\pa_x\ra}
(u_\ep^2)\left(t,x\pm\frac{t}{\ep^2}\right)\\
&=
\pm
\frac{\pa_x^3}{\la\ep\pa_x\ra+1}
u_\ep^\pm
\pm
\frac12
\frac{\pa_x}{\la\ep\pa_x\ra}
(u_\ep^2)\left(t,x\pm\frac{t}{\ep^2}\right).
\end{aligned}
\end{equation}

To work in a low-regularity setting, we rewrite the coupled Boussinesq system \eqref{eq: coupled Boussinesq} in integral form. To this end, we introduce the rescaled Boussinesq phase function
\begin{equation}\label{eq: Boussinesq phase function}
s_\ep (\xi)
:=\frac{\xi}{\ep^2}(\la\ep\xi\ra-1)
=\frac{\xi^3}{\la\ep\xi\ra+1},
\end{equation}
and the associated linear Boussinesq propagators
\begin{equation}\label{eq: coupled Boussinesq flow}
S_\ep^{\pm}(t)
:=e^{\mp it s_\ep(-i\pa_x)}
=
e^{\mp\frac{t}{\ep^2}\pa_x(\la\ep\pa_x\ra-1)}.
\end{equation}
Here, $e^{a\pa_x}$ denotes the translation operator in the spatial variable, namely,
$$
e^{a\pa_x}f(x)=f(x+a).
$$
Applying Duhamel's formula to \eqref{eq: coupled Boussinesq}, together with the decomposition \eqref{decomposition relation} and the identity
$$
e^{a\pa_x}(f^2)=f(x+a)^2=(e^{a\pa_x}f)^2,
$$
we obtain the equivalent integral formulation
\begin{equation}\label{eq: coupled Boussinesq integral form}
\boxed{\quad u_{\ep}^{\pm}(t)
=
S_\ep^{\pm}(t) u_{\ep,0}^{\pm}
\pm \frac{1}{2}\int_0^t
S_\ep^{\pm}(t-t_1)\frac{\pa_x}{\la\ep\pa_x\ra}
\Big\{
\big(u_\ep^\pm (t_1)+ e^{\pm \frac{2t_1}{\ep^2}\pa_x} u_\ep^\mp (t_1) \big)^2
\Big\}
\, dt_1.\quad}
\end{equation}
The initial data are given by
\begin{equation}\label{initial data for coupled Boussinesq}
u_\ep^\pm(0)
:=
u_{\ep,0}^{\pm}
=\frac{1}{2}
\bigg(
u_\ep(0)\mp 
\frac{\ep^2}{\la\ep\pa_x\ra}\pa_x^{-1}\pa_tu_\ep(0)
\bigg).
\end{equation}
The integral equation \eqref{eq: coupled Boussinesq integral form} will serve as the starting point for all subsequent estimates.

\begin{remark}[Formal convergence in integral form]
The integral formulation \eqref{eq: coupled Boussinesq integral form} makes the connection with the KdV approximation transparent. The integral equation \eqref{eq: coupled Boussinesq integral form} formally converges, as $\ep\to0$, to the KdV equation in integral form
\begin{equation}\label{eq: intergal KdV}
\boxed{\quad w^\pm(t)
=
S^\pm(t)w_{0}^\pm
\pm
\frac{1}{2}
\int_0^t S^\pm(t-t_1)\pa_x\big(w^\pm(t_1)\big)^2 \, dt_1,\quad}
\end{equation}
where $S^\pm(t)$ denotes the Airy flow
\begin{equation}\label{eq: Airy flow}
S^{\pm}(t)
:=e^{\mp it s(-i\pa_x)}
=
e^{\pm\frac{t}{2}\pa_x^3},
\qquad
s(\xi)=\frac{\xi^3}{2},
\end{equation}
since the phase function $s_\ep(\xi)$ converges to $s(\xi)$ as $\ep\to0$; see \eqref{eq: Boussinesq phase function}. In view of the decomposition \eqref{decomposition relation}, this is precisely the formal derivation of the two decoupled KdV equations \eqref{eq: KdV} from the Boussinesq equation \eqref{eq: Boussinesq} under the long-wave ansatz \eqref{eq: long wave ansatz for Boussinesq}.
\end{remark}

\subsection{Bourgain spaces}\label{subsec: Bourgain spaces}

We now introduce the Bourgain spaces (also known as $X^{s,b}$--spaces or Fourier restriction spaces) which will be used to capture the dispersive smoothing effect. For $s,b\in\R$, we define the Bourgain norm associated with the rescaled Boussinesq flow $S_\ep^\pm(t)$ (resp. Airy flow $S^\pm(t)$) by
$$
\|u\|_{X_{\ep,\pm}^{s,b}}
:=\|\la\xi\ra^s\la\tau\pm s_\ep(\xi)\ra^b\tilde{u}\|_{L_{\tau, \xi}^2(\R\times\R)}
\quad
(resp. \ \ \|u\|_{X_{\pm}^{s,b}}
:=\|\la\xi\ra^s\la\tau\pm s(\xi)\ra^b\tilde{u}\|_{L_{\tau, \xi}^2(\R\times\R)}).
$$
Here, $\tilde{u}(\tau,\xi)$ denotes the space-time Fourier transform of $u(t,x)$ given by
$$
\tilde{u}(\tau,\xi)
:=
\int_\R \int_\R
u(t,x) e^{-i(t\tau+x\xi)}\, dx \,dt.
$$

\begin{remark}[Sensitivity of the Bourgain norm]\label{rmk: Sensitivity of the Bourgain norm}

The Bourgain norm is adapted to the underlying phase function and captures the smoothing effect of dispersion. Thus, directly comparing $u_\ep^\pm$ and $w^\pm$ in a single Bourgain norm may require additional regularity, since they are associated with different phase functions. To overcome this difficulty, in Section~\ref{subsec: Frequency localized decomposition} below, we will analyze this issue and introduce a frequency-localized auxiliary equation as in \cite{HY2024}.
\end{remark}

We recall basic estimates for the general Bourgain space. These estimates will be used in the local theory and in the approximation estimates below. We refer the reader to \cite{Tao2006, LP2015} for detailed proofs.
\begin{lemma}[Basic estimates for the Bourgain norm]\label{lem: properties of Bourgain norm}
Let $T\in(0,1]$ and set $\eta_T(t)=\eta(t/T)$ where $\eta\in C_c^\infty(\R)$ is the smooth time cut-off defined in \eqref{eq: smooth cutoff}. Let $X^{s,b}$ denote either $X_{\ep,\pm}^{s,b}$ or $X_\pm^{s,b}$, and $S(t)$ denote the corresponding linear flow.
Then, for any $s,b\in\R$, the following estimates hold.
\begin{enumerate}[$(i)$]
\item (Embedding) If $b>\frac{1}{2}$, then $\|u\|_{C_t(\R;H_x^s)}\ls \|u\|_{X^{s,b}}$.
\item (Linear flow estimate) 
If $b>\frac{1}{2}$, then
$$\|\eta_T(t)S(t) u_0\|_{X^{s,b}} \lesssim T^{\frac{1}{2}-b}\|u_0\|_{H^s_x}.$$
\item (Stability with respect to smooth time cut-off)
If $-\frac{1}{2}<b'\leq b<\frac{1}{2}$, then
$$
\|\eta_T(t)u\|_{X^{s,b'}}
\ls
T^{b-b'}\|u\|_{X^{s,b}}.
$$
Moreover, if $\frac{1}{2}<b\leq1$, then
$$
\|\eta_T(t)u\|_{X^{s,b}}
\ls
T^{\frac{1}{2}-b}\|u\|_{X^{s,b}}.
$$

\item (Inhomogeneous term estimate) 
If $\frac{1}{2}<b\leq 1$, then
$$
\bigg\|
\eta_T(t) \int_0^tS(t-t_1)F(t_1)dt_1
\bigg
\|_{X^{s,b}} \ls T^{\frac{1}{2}-b}\|F\|_{X^{s,b-1}}.
$$
\end{enumerate}
\end{lemma}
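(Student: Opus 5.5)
The statement is the standard package of $X^{s,b}$ estimates, and I would prove all four parts by reducing to one-dimensional estimates in the modulation variable. Throughout, let $\phi$ denote the phase ($s_\ep$ or $s$) and conjugate by the linear flow: set $v(t)=S(-t)u(t)$. Then $\tilde v(\tau,\xi)=\tilde u(\tau\mp\phi(\xi),\xi)$, and therefore
$$
\|u\|_{X^{s,b}}=\big\|\la\xi\ra^s\|\la\tau\ra^b\tilde v(\tau,\xi)\|_{L^2_\tau}\big\|_{L^2_\xi}.
$$
Since $S(t)$ is unitary on $H^s_x$ and commutes with multiplication by functions of $t$, each estimate reduces, for every fixed $\xi$, to a statement about $H^b_t$ of a scalar function of $t$. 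One then integrates against $\la\xi\ra^{2s}$. The constants never see $\phi$, so they are uniform in $\ep$ and in the sign $\pm$.

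\textbf{Parts (i) and (ii).} For (i), write $\hat u(t,\xi)=e^{\mp it\phi(\xi)}\hat v(t,\xi)$. Fourier inversion in $\tau$ and Cauchy--Schwarz give
$$
\sup_t|\hat v(t,\xi)|\le\|\la\tau\ra^{-b}\|_{L^2_\tau}\,\|\la\tau\ra^b\tilde v\|_{L^2_\tau},
$$
which is finite precisely because $b>\frac12$. Continuity in time follows by dominated convergence. For (ii), $v(t)=\eta_T(t)u_0$, so the norm equals $\|\eta_T\|_{H^b_t}\|u_0\|_{H^s_x}$. Scaling gives $\|\eta_T\|_{H^b}\ls T^{\frac12-b}$ for $T\le1$; the low-frequency part contributes $T^{1/2}\le T^{1/2-b}$.

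\textbf{Part (iii).} Here we need the one-dimensional bound
$$
\|\eta_Tf\|_{H^{b'}_t}\ls T^{b-b'}\|f\|_{H^b_t},\qquad -\tfrac12<b'\le b<\tfrac12 .
$$
First, by duality and interpolation it suffices to treat $0\le b'\le b$, and then to interpolate between the cases $b'=b$ and $b'=0$.
\begin{itemize}
\item For $b'=0$, use Hölder together with the Sobolev embedding $H^b\subset L^{2/(1-2b)}$, which gives $\|\eta_Tf\|_{L^2}\le\|\eta_T\|_{L^{1/b}}\|f\|_{L^{2/(1-2b)}}\ls T^b\|f\|_{H^b}$.
\item For $b'=b$, use the uniform multiplier bound $\|\eta_T f\|_{H^b}\ls\|f\|_{H^b}$ for $|b|<\frac12$. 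This follows from the fractional Leibniz rule combined with the $L^{1/b}$ estimate on $\eta_T$ and its derivative: one compares the Gagliardo seminorm of $\eta_Tf$ with that of $f$, and the error term is controlled by $\|f\|_{L^{2/(1-2b)}}$.
\end{itemize}
For $\frac12<b\le1$, I would use the Leibniz rule $\|\eta_Tf\|_{H^b}\ls\|\eta_T\|_{L^\infty}\|f\|_{H^b}+\|\eta_T\|_{H^b}\|f\|_{L^\infty}$. Then bound $\|f\|_{L^\infty}\ls\|f\|_{H^b}$ and use $\|\eta_T\|_{H^b}\ls T^{\frac12-b}$.

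\textbf{Part (iv).} After conjugation, the claim becomes
$$
\Big\|\eta_T(t)\int_0^tF(t_1)\,dt_1\Big\|_{H^b_t}\ls T^{\frac12-b}\|F\|_{H^{b-1}_t}.
$$
Write $\int_0^tF=\int\hat F(\tau)\frac{e^{it\tau}-1}{i\tau}\,d\tau$ and split at $|\tau|\le1/T$ and $|\tau|>1/T$.
\begin{itemize}
\item For $|\tau|\le1/T$, Taylor-expand $e^{it\tau}-1$ to get the sum $\sum_k\frac{t^k\eta_T}{k!}(i\tau)^{k-1}$. Then $\|t^k\eta_T\|_{H^b}\ls T^kT^{\frac12-b}$, and Cauchy--Schwarz on $|\tau|\le1/T$ gives $\int_{|\tau|\le1/T}|\tau|^{k-1}|\hat F|\ls T^{1-k}\cdot T^{-\frac12}\cdot T^{b-1}\|F\|_{H^{b-1}}$ up to harmless adjustments. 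The resulting series sums to the desired factor.
\item For $|\tau|>1/T$, split into two pieces.
  \begin{itemize}
  \item The term $\eta_T\,\mathcal F^{-1}[\hat F/(i\tau)]$ is handled by the $b>\frac12$ multiplier bound from (iii) applied to a function with $H^b$ norm at most $\|F\|_{H^{b-1}}$.
  \item The constant term $\eta_T\int_{|\tau|>1/T}\hat F/\tau\,d\tau$ costs $T^{\frac12-b}$ times $\big(\int_{|\tau|>1/T}|\tau|^{-2}\la\tau\ra^{2-2b}\big)^{1/2}\|F\|_{H^{b-1}}\ls T^{b-\frac12}\|F\|_{H^{b-1}}$. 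Since $b-\frac12\ge0$ and $T\le1$, this is bounded.
  \end{itemize}
\end{itemize}

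The main obstacle is the bookkeeping of $T$-powers in (iii) for negative $b'$ and in the low-frequency Taylor piece of (iv). Everything else is routine, and this is exactly the material in Tao's and Linares--Ponce's texts, which I would cite for those details.
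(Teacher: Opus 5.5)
Your proposal is correct and follows the standard textbook argument (conjugation by the linear flow, one-dimensional $H^b_t$ estimates that do not depend on the phase, and the split at $|\tau|\sim 1/T$ with a Taylor expansion for the Duhamel term); the paper does not prove this lemma itself but cites Tao and Linares--Ponce for exactly this argument. The only bookkeeping to tighten is that the mixed-sign case $b'<0<b$ in (iii) needs a factorization through $L^2$ (composition) in addition to duality and interpolation, and that $\|t^k\eta\|_{H^b}\lesssim C^k$ in (iv), which is harmless against $1/k!$.
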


In the next section, we establish uniform bounds for solutions to the Boussinesq system via a standard contraction argument in Bourgain spaces. To incorporate persistence of regularity, which is the key new ingredient of this paper, we introduce a complete metric space equipped with a weaker metric. This construction is standard in the study of high Sobolev norm bounds for nonlinear dispersive equations, and we adapt it to the present setting.

\begin{lemma}[Completeness of the fixed point space]\label{lem: Completeness of the fixed point space}
Let $s\geq 0$, $b\in\R$, and $M_0, M_s \geq 0$. We define
\begin{equation}\label{eq: fixed point space}
\mathcal{X}:=\Big\{ u \in X_{\ep,\pm}^{s,b}:\|u\|_{X_{\ep,\pm}^{s,b}}\leq M_s,\quad \|u\|_{X_{\ep,\pm}^{0,b}}\leq M_0\Big\},
\end{equation}
equipped with the metric
$$
d(u, v):=\|u-v\|_{X_{\ep,\pm}^{0,b}}.
$$
Then $(\mathcal{X},d)$ is a complete metric space.
The same statement holds when $X_{\ep,\pm}^{s,b}$ is replaced by $X_{\pm}^{s,b}$.
\end{lemma}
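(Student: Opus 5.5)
The plan is to take a Cauchy sequence in $(\mathcal{X},d)$ and show directly that it converges in $d$ to an element of $\mathcal{X}$. Metric axioms are immediate since $d$ is induced by the norm of $X_{\ep,\pm}^{0,b}$, and $\mathcal{X}\subset X_{\ep,\pm}^{0,b}$ because $\la\xi\ra^s\geq1$ for $s\ge0$. So only completeness needs proof.

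\textbf{Step 1: a weak limit.} Let $(u_n)\subset\mathcal{X}$ be Cauchy for $d$. Since $X_{\ep,\pm}^{0,b}$ is a Banach space (isometric, via $u\mapsto\la\tau\pm s_\ep(\xi)\ra^b\tilde u$, to a weighted $L^2_{\tau,\xi}$ space), there is $u\in X_{\ep,\pm}^{0,b}$ with $u_n\to u$ in $X_{\ep,\pm}^{0,b}$. The bound $\|u\|_{X_{\ep,\pm}^{0,b}}\le M_0$ then follows from continuity of the norm.

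\textbf{Step 2: the higher norm (the main point).} We must show $u\in X_{\ep,\pm}^{s,b}$ with $\|u\|_{X_{\ep,\pm}^{s,b}}\le M_s$, even though convergence only holds in the weaker norm. The plan is to use Fatou's lemma. Since $\la\tau\pm s_\ep(\xi)\ra^b\tilde u_n\to\la\tau\pm s_\ep(\xi)\ra^b\tilde u$ in $L^2_{\tau,\xi}$, a subsequence converges pointwise almost everywhere in $(\tau,\xi)$. Multiplying by the fixed weight $\la\xi\ra^{2s}$ and applying Fatou gives
$$
\|u\|_{X_{\ep,\pm}^{s,b}}^2=\int\!\!\int\la\xi\ra^{2s}\la\tau\pm s_\ep(\xi)\ra^{2b}|\tilde u|^2\,d\tau\, d\xi\le\liminf_{k\to\infty}\|u_{n_k}\|_{X_{\ep,\pm}^{s,b}}^2\le M_s^2.
$$
Alternatively, the same conclusion follows from weak lower semicontinuity: $(u_n)$ is bounded in the Hilbert space $X_{\ep,\pm}^{s,b}$, so a subsequence converges weakly there, and its weak limit must coincide with $u$ by testing against Schwartz functions.

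Hence $u\in\mathcal{X}$ and $d(u_n,u)\to0$, so $\mathcal{X}$ is complete. The argument uses nothing about the specific phase function, so replacing $s_\ep$ by $s$ gives the statement for $X_\pm^{s,b}$ verbatim.
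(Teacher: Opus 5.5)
Your argument is correct. The paper does not check the lemma by hand. It deduces it from an abstract weak lower semicontinuity result for Bochner spaces (\cite[Theorem~1.2.5]{Caz2003}): if $(f_n)$ is bounded in $L^p(I;X)$ with $X$ reflexive, and $f_n(t)\rightharpoonup f(t)$ in a larger space $Y$ for a.e.\ $t$, then $\|f\|_{L^p(I;X)}\le\liminf_n\|f_n\|_{L^p(I;X)}$.

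To apply this to Bourgain spaces one has to pass to the Fourier side, a step the paper leaves implicit. One takes $f_n(\tau)=\la\tau\pm s_\ep(\xi)\ra^b\tilde u_n(\tau,\cdot)$, $X=L^2(\la\xi\ra^{2s}d\xi)$, $Y=L^2(d\xi)$ and $p=q=2$. One then extracts an a.e.-convergent subsequence exactly as you do in Step~2.

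Your Fatou argument is the special case of that principle in which all spaces are weighted $L^2$ spaces. It is therefore shorter and self-contained, and it makes plain that the phase function plays no role. The abstract route buys generality: the same lemma handles fixed-point spaces built from genuinely mixed norms such as $L^q_tW^{1,r}_x$ (as in Cazenave's $H^1$ theory for NLS), where no Fourier-side weighted-$L^2$ description is available. Your alternative via weak compactness in the Hilbert space $X_{\ep,\pm}^{s,b}$ is the closest analogue of the reflexivity-based formulation the paper cites.
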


Lemma~\ref{lem: Completeness of the fixed point space} is a direct consequence of the following general result; see \cite[Theorem~1.2.5]{Caz2003} and \cite[Proof of Proposition~3.2]{Soh2011-1}.

\begin{proposition}
Let $X\hookrightarrow Y$ be two Banach spaces, and let $1<p,q\leq \infty$. Let $I\subseteq\R$ be an open interval, possibly $I=\R$. Suppose that $\{f_n\}_{n\geq 0}$ is bounded in $L^q(I;Y)$, and let $f:I\to Y$ be such that $f_n(t)\rightharpoonup f(t)$ in $Y$ as $n\to\infty$, for almost every $t\in I$. If $\{f_n\}_{n\geq 0}$ is bounded in $L^p(I;X)$ and $X$ is reflexive, then $f\in L^p(I;X)$ and
$$
\|f\|_{L^p(I;X)}\leq\liminf_{n\to\infty}\|f_n\|_{L^p(I;X)}.
$$
\end{proposition}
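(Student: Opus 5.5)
We plan a weak-limit argument: pass to a subsequence for which $f_n(t)$ converges weakly in $X$, identify the limit with $f(t)$ through the embedding, and then use weak lower semicontinuity in the $L^p(I;X)$ norm. The case $p<\infty$ and the case $p=\infty$ are handled separately.

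\emph{Case $p<\infty$.} Since $1<p<\infty$ and $X$ is reflexive, the Bochner space $L^p(I;X)$ is reflexive. Its dual contains $L^{p'}(I;X^*)$ isometrically; equality holds if $X$ is reflexive, since reflexive spaces have the Radon--Nikodym property. Choose a subsequence $f_{n_k}$ with $\|f_{n_k}\|_{L^p(I;X)}\to\liminf_n\|f_n\|_{L^p(I;X)}$. Extract a further subsequence with $f_{n_k}\rightharpoonup g$ weakly in $L^p(I;X)$.

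We then identify $g=f$ a.e. Take $\psi\in C_c^\infty(I)$ and $y^*\in Y^*$. Since $X\hookrightarrow Y$, the restriction $y^*|_X\in X^*$, so $\int_I\psi(t)\langle y^*,f_{n_k}(t)\rangle\,dt\to\int_I\psi\langle y^*,g\rangle\,dt$. On the other hand, $\langle y^*,f_{n_k}(t)\rangle\to\langle y^*,f(t)\rangle$ for a.e. $t$. On $\operatorname{supp}\psi$ these scalar functions are bounded in $L^q$ with $q>1$, because $\{f_n\}$ is bounded in $L^q(I;Y)$. They are therefore uniformly integrable on the compact support, and Vitali's theorem gives convergence of the integrals to $\int\psi\langle y^*,f\rangle$. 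Hence $\langle y^*,f(t)-g(t)\rangle=0$ for a.e. $t$, for each $y^*$. To pass from each $y^*$ to all $y^*$ simultaneously, reduce to a countable norming family. This is legitimate because $f-g$ is strongly measurable and hence essentially separably valued in $Y$, and a countable subset of $Y^*$ norms a separable subspace. We conclude $f=g\in L^p(I;X)$. Weak lower semicontinuity of the norm then gives $\|f\|_{L^p(I;X)}\le\liminf_k\|f_{n_k}\|=\liminf_n\|f_n\|$.

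\emph{Case $p=\infty$.} Apply the finite-$p$ argument on bounded subintervals $J\subset I$ with exponent $r<\infty$. This gives $\|f\|_{L^r(J;X)}\le\liminf_n\|f_n\|_{L^r(J;X)}\le|J|^{1/r}\liminf_n\|f_n\|_{L^\infty(I;X)}$. Letting $r\to\infty$ and then exhausting $I$ by such $J$ yields the claim.

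The main obstacle is the identification $g=f$. It requires care with measurability and the countable norming family, and it is exactly where the hypothesis $q>1$ is needed, since it supplies the uniform integrability. A simpler route exists when $X$ is also separable (the Hilbert-space setting of Lemma~\ref{lem: Completeness of the fixed point space}). In that case one may instead argue pointwise: the weak limit is identified through $Y$, and lower semicontinuity follows from Fatou's lemma combined with the supremum over a countable dense set of test functionals.
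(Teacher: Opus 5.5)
The paper does not prove this proposition; it only quotes \cite[Theorem~1.2.5]{Caz2003}. Your argument is correct and is essentially the standard proof of that result. Along a subsequence realizing the $\liminf$ of the norms, you extract a weak limit $g$ in the reflexive space $L^p(I;X)$. You identify $g$ with $f$ by testing against $\psi\,(y^*\circ\iota)$, with $\psi\in C_c^\infty(I)$ and $y^*\in Y^*$, and pass to the limit by Vitali. You then conclude by weak lower semicontinuity. Treating $p=\infty$ through exponents $r<\infty$ on bounded subintervals and letting $r\to\infty$ is a tidy alternative to weak-$*$ compactness in $L^\infty(I;X)=L^1(I;X^*)^*$. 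That route would need some care with sequential compactness when $X^*$ is not separable.

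A few corrections of detail.

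You assert that $f$ is strongly measurable in $Y$ without proof. It follows from Pettis' theorem: $t\mapsto\langle y^*,f(t)\rangle$ is an a.e.\ limit of measurable functions, and for a.e.\ $t$, $f(t)$ lies in the closed linear span of the essential ranges of the $f_n$. That span is separable and weakly closed. Only this essential separable-valuedness is actually used in your norming step.

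Your remark that the identification is exactly where $q>1$ is needed is not accurate. Since $\|f_n(t)\|_Y\le\|\iota\|\,\|f_n(t)\|_X$, the $L^p(I;X)$ bound with $p>1$ already makes $\{\psi\langle y^*,f_{n_k}\rangle\}_k$ uniformly integrable on $\operatorname{supp}\psi$. So the $L^q(I;Y)$ hypothesis is redundant in this formulation.

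Finally, the pointwise route you mention at the end needs neither separability of $X$, nor $p>1$, nor the Bochner duality. Fatou along the subsequence gives $\liminf_k\|f_{n_k}(t)\|_X<\infty$ for a.e.\ $t$. Reflexivity then gives an $X$-weakly convergent further subsequence, whose limit must be $f(t)$ by uniqueness of weak limits in $Y$. Hence $\|f(t)\|_X\le\liminf_k\|f_{n_k}(t)\|_X$. Measurability in $X$ follows from Pettis' theorem, using two facts: the separability remark above, now applied in $X$, and the norm-density of $\iota^*(Y^*)$ in $X^*$. The latter holds since $X$ is reflexive and $\iota$ is injective.
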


This criterion has been used in the local $H^1$ well-posedness theory for the nonlinear Schrödinger equation to establish the completeness of fixed point spaces defined by higher Sobolev norms and equipped with weaker metrics; see \cite[Proof of Theorem~4.4.1]{Caz2003}.

\section{Uniform bounds for the Boussinesq system}\label{sec: Uniform bounds for the Boussinesq system}

In this section, we study the initial value problem for the rescaled Boussinesq system in the integral formulation \eqref{eq: coupled Boussinesq integral form}. Using the Fourier restriction norm method, we establish estimates for solutions that are uniform with respect to $\ep$ (see Propositions~\ref{prop: local in time bound for Boussinesq} and \ref{prop: Hs norm bound}). These estimates provide the key ingredient for comparing the Boussinesq and KdV flows in the low-regularity setting.

\subsection{Local-in-time bound}

We first establish local-in-time bounds for the rescaled Boussinesq system that are uniform with respect to $\ep$. The key point is that the lifespan depends only on the lower $L^2$ norm of the initial data.

\begin{proposition}[Local uniform bounds for the Boussinesq system]\label{prop: local in time bound for Boussinesq}
Let $s\geq 0$ and $\frac12<b<\frac34$, and suppose that there exist $R_0\geq 1$ and $R_s>0$ such that
$$
\sup_{\ep\in(0,1]}\|u_{\ep,0}^{\pm}\|_{L_x^2}\leq R_0,
\qquad
\sup_{\ep\in(0,1]}\|u_{\ep,0}^{\pm}\|_{H_x^s}\leq R_s.
$$
Then, there exists a time
\begin{equation}\label{eq: T choice}
T=T(R_0)\sim R_0^{-1/(\frac34-b)},
\end{equation}
independent of $\ep\in(0,1]$ and of $R_s$, and a unique solution $u_\ep^\pm\in C_t([-T,T];H_x^s)$ to the rescaled Boussinesq system \eqref{eq: coupled Boussinesq integral form} with initial data $u_{\ep,0}^{\pm}$ such that
\begin{equation}\label{eq: uniform Xsb norm bounds for Boussinesq 1}
\sup_{\ep\in(0,1]}\|u_\ep^\pm\|_{X_{\ep,\pm}^{s,b}}
\ls
R_s.
\end{equation}
\end{proposition}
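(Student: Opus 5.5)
We run a contraction argument for the truncated Duhamel map on the fixed point space $\mathcal{X}$ of Lemma~\ref{lem: Completeness of the fixed point space}, with $T$ determined by $R_0$ alone. Define, for a pair $\mathbf{u}=(u^+,u^-)$,
$$
\Phi^\pm(\mathbf{u})(t):=\eta_T(t)S_\ep^\pm(t)u_{\ep,0}^\pm\pm\frac12\eta_T(t)\int_0^tS_\ep^\pm(t-t_1)\frac{\pa_x}{\la\ep\pa_x\ra}\Big\{\big(u^\pm+e^{\pm\frac{2t_1}{\ep^2}\pa_x}u^\mp\big)^2\Big\}(t_1)\,dt_1 .
$$
We work on the product of two copies of $\mathcal{X}$ (one in $X_{\ep,+}^{s,b}$, one in $X_{\ep,-}^{s,b}$), with radii $M_0=CR_0$ and $M_s=CR_s$. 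The metric is the $X^{0,b}_{\ep,\pm}$ distance. Completeness is provided by Lemma~\ref{lem: Completeness of the fixed point space}.

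The core input is a bilinear estimate, uniform in $\ep$, of the form
$$
\Big\|\frac{\pa_x}{\la\ep\pa_x\ra}\big(e^{a_1(t)\pa_x}u_1\,e^{a_2(t)\pa_x}u_2\big)\Big\|_{X_{\ep,\pm}^{s,-(1-b')}}\ls \|u_1\|_{X_{\ep,\pm_1}^{s,b}}\|u_2\|_{X_{\ep,\pm_2}^{0,b}}+\|u_1\|_{X_{\ep,\pm_1}^{0,b}}\|u_2\|_{X_{\ep,\pm_2}^{s,b}} .
$$
Here $b'\in(\frac12,b)$ is chosen so that Lemma~\ref{lem: properties of Bourgain norm}(iii) gains a factor $T^{b'-b}$ type power. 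Overall we want the power $T^{\frac34-b}$ after combining (ii)–(iv), which fits the choice $T\sim R_0^{-1/(\frac34-b)}$.

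For $s=0$ this is the uniform bilinear estimate of Hong--Yang \cite{HY2024}. It covers both the same-direction term ($u^\pm u^\pm$, KdV-type, Kenig--Ponce--Vega, using the resonance $|\tau\pm s_\ep(\xi)|$ lower bounds) and the cross terms ($u^\pm$ times the translated $u^\mp$). In the cross terms the transport $e^{\pm 2t/\ep^2\pa_x}$ shifts the modulation by $\frac{2\xi}{\ep^2}$, producing a large resonance that compensates the derivative. To pass to $s>0$ we insert \eqref{ineq: fractional Leibniz rule}, $\la\xi\ra^s\ls\la\xi_1\ra^s+\la\xi_2\ra^s$, inside the multilinear Fourier integral. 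The $L^2$-level estimate is proved by duality with absolute values of Fourier transforms, so it applies verbatim with $\la\xi_j\ra^s|\tilde u_j|$ in place of $|\tilde u_j|$. This gives the refined estimate with all derivatives on one factor.

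With this in hand, Lemma~\ref{lem: properties of Bourgain norm}(ii),(iv) and the time-localization (iii) give two bounds:
$$
\|\Phi(\mathbf u)\|_{X^{0,b}}\le C_1R_0+C_2T^{\theta}M_0^2,\qquad \|\Phi(\mathbf u)\|_{X^{s,b}}\le C_1R_s+C_2T^{\theta}M_0M_s .
$$
The $T^{\frac12-b}$ factors are harmless since $T\le1$; the net power is $\theta>0$, e.g. $\theta=\frac34-b$ after accounting for the losses. We also get the difference bound
$$
\|\Phi(\mathbf u)-\Phi(\mathbf v)\|_{X^{0,b}}\le C_2T^\theta(\|\mathbf u\|_{X^{0,b}}+\|\mathbf v\|_{X^{0,b}})\|\mathbf u-\mathbf v\|_{X^{0,b}} .
$$
Choosing $T^\theta C_2\cdot 2CR_0\le\frac12$, which depends only on $R_0$, closes all three estimates simultaneously. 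The $H^s$ bound closes linearly in $M_s$ precisely because the nonlinear coefficient involves only $M_0$. The Banach fixed point theorem then gives a solution on $[-T,T]$ with $\|u_\ep^\pm\|_{X^{s,b}}\le CR_s$. Continuity in $H^s$ follows from embedding (i), and uniqueness follows from the $L^2$ contraction.

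The main obstacle is establishing the refined bilinear estimate uniformly in $\ep$, in particular for the cross interaction with the $\ep$-dependent translation. There the resonance function must be shown to dominate $|\xi|$ uniformly, including in the regime $|\xi|\gtrsim\ep^{-1}$ where $s_\ep$ behaves like $|\xi|\xi/\ep$ rather than $\xi^3$. I would first reduce this to the exact statements in \cite{HY2024} at the $L^2$ level and then check that the Leibniz splitting commutes with each case of their dyadic/resonance decomposition. Verifying the exponent bookkeeping $\frac34-b$ is secondary.
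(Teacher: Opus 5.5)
Your architecture matches the paper's. You run a Banach fixed point argument on the ball of Lemma~\ref{lem: Completeness of the fixed point space} with the weaker $X^{0,b}$ metric, you use the refined bilinear estimates of Lemma~\ref{lem: bilinear estimates} obtained by splitting $\la\xi\ra^s\ls\la\xi_1\ra^s+\la\xi-\xi_1\ra^s$ before invoking the $s=0$ bounds of \cite{HY2024}, and you observe that the $X^{s,b}$ bound is linear in $M_s$ with a coefficient involving only $M_0$.

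\textbf{The step that fails.} Your map puts $\eta_T(t)$ in front of the free evolution, and this breaks the self-map estimate. Indeed
$\|\eta_T S_\ep^\pm(t)u_{\ep,0}^\pm\|_{X^{0,b}_{\ep,\pm}}\sim\|\eta_T\|_{H^b_t}\|u_{\ep,0}^\pm\|_{L^2_x}\sim T^{\frac12-b}\|u_{\ep,0}^\pm\|_{L^2_x}$. Since $b>\frac12$, the factor $T^{\frac12-b}$ is $\ge1$ and blows up as $T\to0$. So your claim that the $T^{\frac12-b}$ factors are harmless because $T\le1$ is backwards.

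Consequences:
\begin{itemize}
\item Your bound $\|\Phi(\mathbf u)\|_{X^{0,b}}\le C_1R_0+C_2T^\theta M_0^2$, with $C_1$ independent of $T$, is false for your $\Phi$.
\item The ball with $M_0=CR_0$ is not preserved once $R_0$ is large, because large $R_0$ forces $T$ small.
\item Enlarging the radii to $M_0\sim T^{\frac12-b}R_0$ and $M_s\sim T^{\frac12-b}R_s$ does not recover the statement. Even granting the best available Duhamel gain $T^{\frac34-b}$, closing requires $T^{\frac54-2b}R_0\ls1$. This fails for large $R_0$ when $\frac58\le b<\frac34$, and gives a different lifespan exponent when $b<\frac58$.
\item The bound on $\|u_\ep^\pm\|_{X^{s,b}_{\ep,\pm}}$ also picks up an $R_0$-dependent factor $T^{\frac12-b}$.
\end{itemize}

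\textbf{The fix (the paper's choice of cut-offs).} Put the fixed cut-off $\eta_1(t)$ outside, and put $\eta_T(t_1)$ on the nonlinearity inside the Duhamel integral.
\begin{itemize}
\item Lemma~\ref{lem: properties of Bourgain norm}(ii),(iv) are then used at scale $1$ and cost only absolute constants.
\item Lemma~\ref{lem: properties of Bourgain norm}(iii), applied with indices $b-1\le-\frac14$, gives $\|\eta_TF\|_{X^{s,b-1}_{\ep,\pm}}\ls T^{\frac34-b}\|F\|_{X^{s,-\frac14}_{\ep,\pm}}$. This is where $b<\frac34$ and the exponent in $T\sim R_0^{-1/(\frac34-b)}$ come from.
\item It also yields $\|u_\ep^\pm\|_{X^{s,b}_{\ep,\pm}}\le 2cR_s$, with $c$ independent of $R_0$ and $\ep$.
\item Both cut-offs equal $1$ for $|t_1|\le|t|\le T$, so the fixed point solves \eqref{eq: coupled Boussinesq integral form} on $[-T,T]$.
\end{itemize}

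Relatedly, your choice $b'\in(\frac12,b)$ points the wrong way. Lemma~\ref{lem: properties of Bourgain norm}(iii) produces the gain $T^{b'-b}$ only when $b'>b$. The relevant choice is $b'-1=-\frac14$, that is, $b'=\frac34$.
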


\begin{remark}[Refinement in Proposition~\ref{prop: local in time bound for Boussinesq}]
\begin{enumerate}[$(i)$]

\item
A similar uniform bound was established in \cite[Proposition~5.2]{HY2024} for the KdV approximation in a low-regularity setting.

\item
Proposition~\ref{prop: local in time bound for Boussinesq} refines this previous result by showing that the lifespan in the higher-regularity space $H^s$ depends only on the lower $L^2$ norm of the initial data. More precisely, whereas the local existence time in \cite[Proposition~5.2]{HY2024} depends on $R_s$, here it depends only on the lower-regularity quantity $R_0$; see \eqref{eq: T choice}.

\item
This refinement is an instance of the \emph{persistence of regularity} property: higher $H^s$ regularity is propagated on the same lifespan as the lower-regularity solution. This property is fundamental in the study of high Sobolev norm bounds for nonlinear dispersive equations and will play a central role in the proof of Proposition~\ref{prop: Hs norm bound}; see \cite{Soh2011-1,Soh2011-2,Tao2006}.

\end{enumerate}
\end{remark}

For the proof of Proposition~\ref{prop: local in time bound for Boussinesq}, we employ the following bilinear estimates for the rescaled Boussinesq flow.

\begin{lemma}[Bilinear estimates]\label{lem: bilinear estimates}
For $s'\geq s\geq 0$ and $\frac12<b<1$, one has
\begin{equation}\label{ineq: bilin est for Boussinesq 1}
\bigg\| \frac{\pa_x}{\langle\epsilon\partial_x\rangle}(uv)\bigg\|_{X_{\ep,\pm}^{s,-\frac14}}
\ls
\|u\|_{X_{\ep,\pm}^{s,b}}
\|v\|_{X_{\ep,\pm}^{0,b}}
+
\|u\|_{X_{\ep,\pm}^{0,b}}
\|v\|_{X_{\ep,\pm}^{s,b}},
\end{equation}
\begin{equation}\label{ineq: bilin est for Boussinesq 2}
\bigg\|
\frac{\pa_x}{\langle\epsilon\partial_x\rangle}
e^{\pm\frac{2t}{\ep^2}\pa_x}(uv)
\bigg\|_{X_{\ep,\pm}^{s,-\frac14}}
\ls
\ep^{\min\{s'-s,\frac12\}}
\Big(
\|u\|_{X_{\ep,\mp}^{s',b}}
\|v\|_{X_{\ep,\mp}^{0,b}}
+
\|u\|_{X_{\ep,\mp}^{0,b}}
\|v\|_{X_{\ep,\mp}^{s',b}}
\Big),
\end{equation}
\begin{equation}\label{ineq: bilin est for Boussinesq 3}
\bigg\|
\frac{\pa_x}{\langle\epsilon\partial_x\rangle}
\Big(u\cdot e^{\pm\frac{2t}{\ep^2}\pa_x}v\Big)
\bigg\|_{X_{\ep,\pm}^{s,-\frac14}}
\ls
\ep^{\min\{s'-s,1\}}
\Big(
\|u\|_{X_{\ep,\pm}^{s',b}}
\|v\|_{X_{\ep,\mp}^{0,b}}
+
\|u\|_{X_{\ep,\pm}^{0,b}}
\|v\|_{X_{\ep,\mp}^{s',b}}
\Big).
\end{equation}
\end{lemma}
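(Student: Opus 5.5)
The plan is to reduce all three estimates to their $s=0$ versions, which are essentially the bilinear estimates of Hong--Yang \cite{HY2024}, by means of the elementary inequality \eqref{ineq: fractional Leibniz rule}. By duality and Plancherel, each left-hand side becomes a weighted convolution integral over $\tau=\tau_1+\tau_2$, $\xi=\xi_1+\xi_2$. The weight is
$$
\frac{|\xi|\la\xi\ra^s}{\la\ep\xi\ra\la\tau\pm s_\ep(\xi)\ra^{1/4}},
$$
and the inputs are $\tilde u(\tau_1,\xi_1)$ and $\tilde v(\tau_2,\xi_2)$. For the terms containing the translation $e^{\pm\frac{2t}{\ep^2}\pa_x}$, the modulation is shifted by $\pm\frac{2\xi_j}{\ep^2}$, since translation by $a(t)=\pm 2t/\ep^2$ multiplies the symbol by $e^{ia\xi}$. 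Applying \eqref{ineq: fractional Leibniz rule} with $m=2$ gives $\la\xi\ra^s\ls\la\xi_1\ra^s+\la\xi_2\ra^s$. The integral therefore splits into two pieces: in the first the weight $\la\xi_1\ra^s$ is absorbed into $\tilde u$, and in the second $\la\xi_2\ra^s$ is absorbed into $\tilde v$. Since all weights are nonnegative, we may take $\tilde u,\tilde v\ge0$. Each piece is then exactly the $s=0$ estimate applied to the pair $(\la\pa_x\ra^su,v)$ or $(u,\la\pa_x\ra^sv)$. This yields \eqref{ineq: bilin est for Boussinesq 1} directly from the $L^2$-level bilinear estimate $\|\frac{\pa_x}{\la\ep\pa_x\ra}(uv)\|_{X^{0,-1/4}_{\ep,\pm}}\ls\|u\|_{X^{0,b}_{\ep,\pm}}\|v\|_{X^{0,b}_{\ep,\pm}}$. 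I would either cite that estimate from \cite{HY2024} or reprove it by the Kenig--Ponce--Vega resonance argument, as follows.

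For the $s=0$ same-direction case, the resonance function is
$$
H=s_\ep(\xi)-s_\ep(\xi_1)-s_\ep(\xi_2).
$$
In the regime $|\ep\xi_j|\lesssim1$ this behaves like $\xi\xi_1\xi_2$, and in the regime $|\ep\xi_j|\gg1$ it behaves like $\ep^{-1}\xi\xi_1\xi_2$ up to constants. In either regime one checks that $|\xi|/\la\ep\xi\ra\ls |H|^{1/2}$ after the usual case analysis, up to the low-frequency region, which is trivial. One of the three modulations is $\gtrsim|H|$. The estimate with output weight $-\frac14$ and input weight $b>\frac12$ then follows from the KdV-type Strichartz and bilinear smoothing estimates for $S_\ep^\pm$, uniformly in $\ep$. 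This is the step I would simply import from \cite{HY2024}.

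For \eqref{ineq: bilin est for Boussinesq 2} and \eqref{ineq: bilin est for Boussinesq 3}, the translated interactions are strongly nonresonant. In \eqref{ineq: bilin est for Boussinesq 2} both inputs move opposite to the output, and in \eqref{ineq: bilin est for Boussinesq 3} one input does. The resonance function then contains the term $\frac{2\xi_j}{\ep^2}$, so $|H|\gtrsim \ep^{-2}|\xi_j|$ apart from lower-order dispersive terms. I would exploit this as follows. First, split $\la\xi\ra^s$ onto one factor as above. Next, when that factor is at frequency $|\xi_j|\gtrsim1$, write $\la\xi_j\ra^s=\la\xi_j\ra^{s'}\la\xi_j\ra^{-(s'-s)}$. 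The leftover negative power $\la\xi_j\ra^{-(s'-s)}$, combined with the modulation gain $|H|^{-\delta}\lesssim \ep^{2\delta}|\xi_j|^{-\delta}$ coming from whichever modulation is large, produces the factor $\ep^{\min\{s'-s,\cdot\}}$. The cap of $\frac12$ (resp.\ $1$) reflects how much modulation weight is available: roughly $\frac14$ from the output and $b$ from an input, with \eqref{ineq: bilin est for Boussinesq 3} gaining twice because only one translation appears. Low frequencies $|\xi_j|\ls\ep$ would be handled separately using the derivative $\pa_x$ in front.

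The main obstacle is this $\ep$-gain bookkeeping in the translated estimates. There I must verify that trading derivatives for powers of $\ep$ through the large resonance $\ep^{-2}\xi_j$ yields exactly the exponents $\min\{s'-s,\frac12\}$ and $\min\{s'-s,1\}$, uniformly across the regimes $|\ep\xi|\lessgtr1$. I would first try to reduce this to the corresponding $s=0$, $s'-s$ estimates of \cite{HY2024}, applied to $\la\pa_x\ra^su$, using the Leibniz splitting once more.
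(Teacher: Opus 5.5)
Your treatment of \eqref{ineq: bilin est for Boussinesq 1} is exactly the paper's: Plancherel, the split $\la\xi\ra^s\ls\la\xi_1\ra^s+\la\xi-\xi_1\ra^s$, and the $L^2$-level kernel bound from \cite{HY2024}.

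The gap is in \eqref{ineq: bilin est for Boussinesq 2}--\eqref{ineq: bilin est for Boussinesq 3}. There you put the leftover weight $\la\xi_j\ra^{-(s'-s)}$ on the input carrying the derivatives and invoke $|H|\gtrsim\ep^{-2}|\xi_j|$. Neither ingredient is right. Set $g(x):=x\la\ep x\ra$, so that $s_\ep(x)=\ep^{-2}(g(x)-x)$, and take the upper sign.
\begin{itemize}
\item In \eqref{ineq: bilin est for Boussinesq 2}, $H=\ep^{-2}\big(g(\xi)+g(\xi_1)+g(\xi_2)\big)$. This is $\gtrsim\ep^{-2}|\xi|$, but it can be $\ll\ep^{-2}|\xi_j|$ in high--high$\to$low interactions.
\item In \eqref{ineq: bilin est for Boussinesq 3}, $H=\ep^{-2}\big(g(\xi)-g(\xi_1)+g(\xi_2)\big)\approx 2\xi_2/\ep^2$, which sees only the translated factor.
\end{itemize}
More importantly, the $\ep$-gain comes from pairing a \emph{negative power of the output frequency} with the derivative factor $|\xi|/\la\ep\xi\ra$. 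For example, in \eqref{ineq: bilin est for Boussinesq 3} one has $|\pa_{\xi_2}H|\ge 2\ep^{-2}$, which bounds the kernel integral by $\ep^2\xi^2\la\xi\ra^{-2(s'-s)}\la\ep\xi\ra^{-2}\ls\ep^{2\min\{s'-s,1\}}$. That Jacobian, not a count of modulation weights, is where the cap $1$ comes from. A negative power at an input frequency smaller than $|\xi|$ cannot perform this conversion.

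This is more than bookkeeping: the piece estimates your scheme needs are false. With the sum split, the piece in which the untranslated factor carries $\la\xi_1\ra^s$ would have to satisfy
$$\Big\|\frac{\pa_x}{\la\ep\pa_x\ra}\big(\la\pa_x\ra^su\cdot e^{\frac{2t}{\ep^2}\pa_x}v\big)\Big\|_{X^{0,-\frac14}_{\ep,+}}\ls\ep^{\min\{s'-s,1\}}\|u\|_{X^{s',b}_{\ep,+}}\|v\|_{X^{0,b}_{\ep,-}}.$$
Take $\tilde u=\mathbf 1_{\{|\xi_1|\le1,\,|\tau_1+s_\ep(\xi_1)|\le1\}}$ and $\tilde v=\mathbf 1_{\{\xi_2\in[N,N+1],\,|\tau_2-s_\ep(\xi_2)|\le1\}}$ with $N=\ep^{-1}$.
\begin{itemize}
\item Both norms on the right are $\sim1$.
\item The transform of the product is $\gtrsim\ep^2$ on a set of area $\sim\ep^{-2}$.
\item On that set the output modulation is $\sim\ep^{-3}$ and the multiplier is $\sim\ep^{-1}$.
\end{itemize}
Hence the left side is $\gtrsim\ep^{3/4}$, which is false once $s'-s>\frac34$. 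Moving $\xi_1$ to $[1,2]$ changes nothing, since $\la\xi_1\ra^{-(s'-s)}\sim1$ there. Your low-frequency caveat does not help either, because $\pa_x$ acts at the output frequency $\sim\ep^{-1}$.

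Your fallback, applying the $(0,s'-s)$ estimates of \cite{HY2024} to $\la\pa_x\ra^su$ as black boxes, also fails. Those are the crude versions, and they produce $\|u\|_{X^{s'}}\|v\|_{X^{s'-s}}$, which the right-hand side does not control.

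The fix is the paper's inequality $\la\xi\ra^s\ls\la\xi\ra^{s-s'}\big(\la\xi_1\ra^{s'}+\la\xi_2\ra^{s'}\big)$, which keeps the negative weight on the output frequency. Each piece then has exactly the kernel treated in \cite{HY2024}, and their integral bounds apply verbatim. Equivalently, split according to the larger input frequency, so that $\la\xi_{\max}\ra^{-(s'-s)}\ls\la\xi\ra^{-(s'-s)}$.
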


\begin{remark}[Refinement in Lemma~\ref{lem: bilinear estimates}]\label{remark: refinement of bilinear estimates}
\begin{enumerate}[$(i)$]

\item
Analogous bilinear estimates were established in \cite[Lemmas~4.3--4.5]{HY2024} and used to prove the local uniform bounds in \cite[Proposition~5.2]{HY2024}. More precisely, \eqref{ineq: bilin est for Boussinesq 1}, \eqref{ineq: bilin est for Boussinesq 2}, and \eqref{ineq: bilin est for Boussinesq 3} correspond to \cite[Lemmas~4.3, 4.4, and~4.5]{HY2024}, respectively.

\item
Lemma~\ref{lem: bilinear estimates} refines these previous estimates by reducing the total number of derivatives required on the right-hand side. More precisely, the estimate
\begin{equation}\label{eq: crude bilinear estimate}
\bigg\|
\frac{\pa_x}{\langle\epsilon\partial_x\rangle}(uv)
\bigg\|_{X_{\ep,\pm}^{s,-\frac14}}
\ls
\|u\|_{X_{\ep,\pm}^{s,b}}
\|v\|_{X_{\ep,\pm}^{s,b}},
\end{equation}
proved in \cite[Lemma~4.3]{HY2024}, is refined to \eqref{ineq: bilin est for Boussinesq 1}. This refinement is consistent with the fractional Leibniz rule.

\item
Although the proof of Lemma~\ref{lem: bilinear estimates} requires only a minor modification of the argument in \cite{HY2024} (see \eqref{ineq: fractional Leibniz rule application}), this refinement is essential for establishing the persistence of regularity.
\end{enumerate}
\end{remark}

\begin{proof}[Sketch of the proof of Lemma~\ref{lem: bilinear estimates}]

Following the standard argument of Kenig--Ponce--Vega \cite{KPV1996}, the proof of the bilinear estimates is reduced to establishing uniform bounds for certain integrals. Except for the refinement based on \eqref{ineq: fractional Leibniz rule application} (see also Remark~\ref{remark: where bilinear estimates are improved}), the proofs follow those of \cite[Lemmas~4.3, 4.4, and~4.5]{HY2024}. We therefore sketch only the reduction of \eqref{ineq: bilin est for Boussinesq 1} to the integral estimate \eqref{eq: reduction of the proof of the bilinear estimate}. By symmetry, it suffices to consider the $\|\frac{\pa_x}{\la\ep\pa_x\ra}(uv)\|_{X_{\ep,+}^{s,-\frac{1}{4}}}$ case.

To prove \eqref{ineq: bilin est for Boussinesq 1}, we apply Plancherel's theorem to obtain
$$
\bigg\|
\frac{\pa_x}{\la\ep\pa_x\ra}(uv)
\bigg\|_{X_{\ep,+}^{s,-\frac{1}{4}}}
=
\bigg\|
\frac{\la\xi\ra^s}{\la\tau+s_\ep(\xi)\ra^{\frac14}}
\frac{\xi}{\la\ep\xi\ra}
\frac{1}{(2\pi)^2}
\iint_{\R^2}
\tilde{u}(\tau_1,\xi_1)
\tilde{v}(\tau-\tau_1,\xi-\xi_1)
\, d\xi_1\, d\tau_1
\bigg\|_{L_{\tau,\xi}^2}.
$$
Applying the elementary inequality
\begin{equation}\label{ineq: fractional Leibniz rule application}
\la\xi\ra^s
\ls\la\xi_1\ra^s+\la\xi-\xi_1\ra^s,
\end{equation}
valid for $s\geq0$, we obtain
$$
\bigg\|
\frac{\pa_x}{\langle\epsilon\partial_x\rangle}(uv)
\bigg\|_{X_{\ep,+}^{s,-\frac14}}
\lesssim
\|\mathcal B(U_s,V_0)\|_{L_{\tau,\xi}^2}
+
\|\mathcal B(U_0,V_s)\|_{L_{\tau,\xi}^2},
$$
where
$$
\mathcal B(U,V)(\tau,\xi)
:=
\frac{|\xi|}
{\la\tau+s_\ep(\xi)\ra^{\frac14}\la\ep\xi\ra}
\iint_{\R^2}
\frac{U(\tau_1,\xi_1)V(\tau-\tau_1,\xi-\xi_1)}{\la\tau_1+s_\ep(\xi_1)\ra^b\la\tau-\tau_1+s_\ep(\xi-\xi_1)\ra^b}
\,d\xi_1\,d\tau_1,
$$
and
$$
U_\ell(\tau,\xi)
:=
\la\xi\ra^\ell
\la\tau+s_\ep(\xi)\ra^b
|\tilde{u}(\tau,\xi)|,
\qquad
V_\ell(\tau,\xi)
:=
\la\xi\ra^\ell
\la\tau+s_\ep(\xi)\ra^b
|\tilde{v}(\tau,\xi)|.
$$
By construction,
$$
\|U_\ell\|_{L_{\tau,\xi}^2}
=
\|u\|_{X_{\ep,+}^{\ell,b}},
\qquad
\|V_\ell\|_{L_{\tau,\xi}^2}
=
\|v\|_{X_{\ep,+}^{\ell,b}}.
$$
Hence, it suffices to prove
\begin{equation}\label{eq: reduced bilinear estimate}
\|\mathcal B(U,V)\|_{L_{\tau,\xi}^2}
\lesssim
\|U\|_{L_{\tau,\xi}^2}
\|V\|_{L_{\tau,\xi}^2}.
\end{equation}

Indeed, applying Hölder's inequality together with
$$
\int_\R
\frac{d\tau_1}
{\la\tau_1+\alpha\ra^{2b}\la\beta-\tau_1\ra^{2b}}
\ls
\frac{1}{\la\alpha+\beta\ra^{2b}},
$$
valid for $b>\frac12$, we obtain
$$
\begin{aligned}
\|\mathcal{B}(U,V)\|_{L_{\tau,\xi}^2}^2
&\leq
\Bigg\{
\sup_{(\tau,\xi)\in\R^2}
\frac{\xi^2}
{\la\tau+s_\ep(\xi)\ra^{\frac{1}{2}}\la\ep\xi\ra^2}
\iint_{\R^2}
\frac{d\xi_1\,d\tau_1}
{\la\tau_1+s_\ep(\xi_1)\ra^{2b}
\la\tau-\tau_1+s_\ep(\xi-\xi_1)\ra^{2b}}
\Bigg\}
\\
&\qquad\times
\Big\|
\|U(\tau_1,\xi_1)
V(\tau-\tau_1,\xi-\xi_1)\|_{L_{\tau_1,\xi_1}^2}
\Big\|_{L_{\tau,\xi}^2}^2
\\
&\ls
\bigg\{
\sup_{(\tau,\xi)\in\R^2}
I_\ep(\tau,\xi)
\bigg\}
\|U\|_{L_{\tau,\xi}^2}^2
\|V\|_{L_{\tau,\xi}^2}^2,
\end{aligned}
$$
where
\begin{equation}\label{eq: reduction of the proof of the bilinear estimate}
I_\ep(\tau,\xi)
:=
\frac{\xi^2}
{\la\ep\xi\ra^2
\la\tau+s_\ep(\xi)\ra^{\frac{1}{2}}}
\int_\R
\frac{d\xi_1}
{\la\tau+s_\ep(\xi_1)+s_\ep(\xi-\xi_1)\ra^{2b}}.
\end{equation}
Therefore, it remains to establish the uniform bound
$$
I_\ep(\tau,\xi)\lesssim 1,
$$
which was proved in \cite[Lemma~4.3]{HY2024}. This completes the proof of \eqref{ineq: bilin est for Boussinesq 1}. The proofs of \eqref{ineq: bilin est for Boussinesq 2} and \eqref{ineq: bilin est for Boussinesq 3} are completely analogous, relying on the corresponding integral estimates in \cite{HY2024} with the refinement based on $\la\xi\ra^s\ls\la\xi\ra^{s-s'}(\la\xi_1\ra^{s'}+\la\xi-\xi_1\ra^{s'})$ instead of \eqref{ineq: fractional Leibniz rule application}.
\end{proof}

\begin{remark}\label{remark: where bilinear estimates are improved}
In the previous work \cite{HY2024}, the crude inequality $\la\xi\ra^s\ls \la\xi_1\ra^s\la\xi-\xi_1\ra^s$ was used instead of \eqref{ineq: fractional Leibniz rule application}, which leads to \eqref{eq: crude bilinear estimate}.
\end{remark}

Now, we establish the local-in-time uniform bounds for the rescaled Boussinesq system with persistence of regularity. The proof is based on a standard contraction mapping argument, together with the completeness of the fixed point space (Lemma~\ref{lem: Completeness of the fixed point space}) and the refined bilinear estimates (Lemma~\ref{lem: bilinear estimates}).

\begin{proof}[Proof of Proposition~\ref{prop: local in time bound for Boussinesq}]
We introduce the product spaces
$$
\mathbf{X}_{\ep}^{s,b}
:=
X_{\ep,+}^{s,b}\times X_{\ep,-}^{s,b},
\qquad
\mathbf{L}^2:=L^2(\R)\times L^2(\R),
\qquad
\mathbf{H}^s:=H^s(\R)\times H^s(\R).
$$
Fix $\ep\in(0,1]$ and let
$$
\mathbf{u}=(u_\ep^+,u_\ep^-),
\qquad
\mathbf{u}_{\ep,0}
=(u_{\ep,0}^{+},u_{\ep,0}^{-})\in\mathbf{H}^s,
$$
with
$$
\|\mathbf{u}_{\ep,0}\|_{\mathbf{L}^2}\leq2R_0,
\qquad
\|\mathbf{u}_{\ep,0}\|_{\mathbf{H}^s}\leq2R_s.
$$
Let $\eta_T(t)=\eta(\frac{t}{T})$, where $\eta\in C_c^\infty(\R)$ is a smooth time cut-off function. We claim that $\mathbf{\Phi}_\ep(\mathbf{u})=(\Phi_\ep^+(\mathbf{u}),\Phi_\ep^-(\mathbf{u}))$ is a contraction mapping, where
$$
\Phi_\ep^\pm(\mathbf{u})
:=
\eta_1(t)S_\ep^\pm(t)u_{\ep,0}^\pm
\pm
\frac{\eta_1(t)}2
\int_0^t
S_\ep^\pm(t-t_1)
\eta_T(t_1)
\frac{\pa_x}{\la\ep\pa_x\ra}
\Big(
u_\ep^\pm(t_1)
+
e^{\pm\frac{2t_1}{\ep^2}\pa_x}
u_\ep^\mp(t_1)
\Big)^2
\,dt_1.
$$
Indeed, applying Lemma~\ref{lem: properties of Bourgain norm}, we obtain
$$
\begin{aligned}
\|\Phi_\ep^\pm(\mathbf{u})\|_{X_{\ep,\pm}^{s,b}}
&
\ls
\|u_{\ep,0}^{\pm}\|_{H_x^s}
+
\bigg\|
\eta_{T}
\frac{\pa_x}{\la\ep\pa_x\ra}
\Big(
u_\ep^\pm
+
e^{\pm\frac{2t}{\ep^2}\pa_x}
u_\ep^\mp
\Big)^2
\bigg\|_{X_{\ep,\pm}^{s,b-1}}
\\
&
\ls
R_s
+
T^{\frac34-b}
\bigg\|
\frac{\pa_x}{\la\ep\pa_x\ra}
\Big(
u_\ep^\pm
+
e^{\pm\frac{2t}{\ep^2}\pa_x}
u_\ep^\mp
\Big)^2
\bigg\|_{X_{\ep,\pm}^{s,-\frac14}}
\\
&=
R_s
+
T^{\frac34-b}
\bigg\|
\frac{\pa_x}{\la\ep\pa_x\ra}
\Big(
(u_\ep^\pm)^2
+
2u_\ep^\pm
e^{\pm\frac{2t}{\ep^2}\pa_x}
u_\ep^\mp
+
e^{\pm\frac{2t}{\ep^2}\pa_x}
(u_\ep^\mp)^2
\Big)
\bigg\|_{X_{\ep,\pm}^{s,-\frac14}}.
\end{aligned}
$$
Then, applying the bilinear estimates in Lemma~\ref{lem: bilinear estimates} term by term, we obtain
\begin{equation}\label{ineq: fixed point condition 1}
\|\mathbf{\Phi}_\ep(\mathbf{u})\|_{\mathbf{X}_{\ep}^{s,b}}
\le
cR_s
+
cT^{\frac34-b}
\|\mathbf{u}\|_{\mathbf{X}_{\ep}^{0,b}}
\|\mathbf{u}\|_{\mathbf{X}_{\ep}^{s,b}},
\end{equation}
for some constant $c>0$, independent of $\ep$. For the difference, similarly, one can show that 
\begin{equation}\label{ineq: fixed point condition 2}
\|
\mathbf{\Phi}_\ep(\mathbf{u}_1)
-
\mathbf{\Phi}_\ep(\mathbf{u}_2)
\|_{\mathbf{X}_{\ep}^{0,b}}
\leq
cT^{\frac34-b}
\big(
\|\mathbf{u}_1\|_{\mathbf{X}_{\ep}^{0,b}}
+
\|\mathbf{u}_2\|_{\mathbf{X}_{\ep}^{0,b}}
\big)
\|
\mathbf{u}_1-\mathbf{u}_2
\|_{\mathbf{X}_{\ep}^{0,b}}.
\end{equation}

We define
\begin{equation}\label{eq: contraction mapping space}
\mathcal{X}_c
:=
\Big\{
\mathbf{u}\in\mathbf{X}_{\ep}^{s,b}
:
\|\mathbf{u}\|_{\mathbf{X}_{\ep}^{0,b}}
\le
2cR_0,
\quad
\|\mathbf{u}\|_{\mathbf{X}_{\ep}^{s,b}}
\le
2cR_s
\Big\},
\end{equation}
equipped with the metric
$$
d(\mathbf{u},\mathbf{v})
:=
\|\mathbf{u}-\mathbf{v}\|_{\mathbf{X}_{\ep}^{0,b}}.
$$
Note that by Lemma~\ref{lem: Completeness of the fixed point space}, $(\mathcal{X}_c,d)$ is a complete metric space. Hence, choosing
$$
T
=
(8c^2R_0)^{-1/(\frac34-b)},
$$
it follows from
\eqref{ineq: fixed point condition 1}
and
\eqref{ineq: fixed point condition 2}
that
$\mathbf{\Phi}_\ep$
is a contraction on
$(\mathcal{X}_c,d)$.
The Banach fixed point theorem therefore yields a unique solution 
$u_\ep^\pm(t)$ satisfying $\|u_\ep^\pm\|_{X_{\ep,\pm}^{s,b}}\leq2cR_s$.
\end{proof}

By the same argument used to prove \eqref{ineq: bilin est for Boussinesq 1}, one obtains the bilinear estimate (see \cite{KPV1996})
\begin{equation}\label{ineq: bilin est for KdV}
\|\pa_x(uv)\|_{X_{\pm}^{s,-\frac{1}{4}}}
\ls
\|u\|_{X_{\pm}^{s,b}} \|v\|_{X_{\pm}^{0,b}}
+
\|u\|_{X_{\pm}^{0,b}} \|v\|_{X_{\pm}^{s,b}}.
\end{equation}
Repeating the proof of Proposition~\ref{prop: local in time bound for Boussinesq}, we establish the corresponding local uniform bound for the KdV equation with persistence of regularity.

\begin{proposition}[Local uniform bounds for the KdV]\label{prop: local in time bound for KdV}
Let $s\geq0$ and $\frac12<b<\frac34$, and suppose that there exist $R_0, R_s\geq1$ such that
$$
\|w_0^\pm\|_{L_x^2}
\le
R_0,
\qquad
\|w_0^\pm\|_{H_x^s}
\le
R_s.
$$
Then, there exists a time
$$
T=T(R_0)\sim R_0^{-1/(\frac34-b)},
$$
independent of $R_s$, and a unique solution $w^\pm\in C_t([-T,T];H_x^s)$ to the KdV equation \eqref{eq: KdV} with initial data $w_0^\pm$ such that
\begin{equation}\label{eq: uniform Xsb norm bounds for KdV}
\|w^\pm\|_{X_{\pm}^{s,b}} \ls R_s.
\end{equation}
\end{proposition}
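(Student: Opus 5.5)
The plan is to repeat the contraction argument from the proof of Proposition~\ref{prop: local in time bound for Boussinesq}, with the KdV bilinear estimate \eqref{ineq: bilin est for KdV} replacing Lemma~\ref{lem: bilinear estimates}. Since the two equations in \eqref{eq: intergal KdV} are decoupled, we treat each sign $\pm$ separately, with no cross terms. Fix a sign and define
$$
\Phi^\pm(w):=\eta_1(t)S^\pm(t)w_0^\pm\pm\frac{\eta_1(t)}{2}\int_0^tS^\pm(t-t_1)\eta_T(t_1)\pa_x\big(w(t_1)\big)^2\,dt_1 .
$$

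First we derive the two key estimates. Lemma~\ref{lem: properties of Bourgain norm}(ii),(iv) give the linear and Duhamel bounds. Part (iii), with $b-1<-\frac14<\frac12$ since $b<\frac34$, lets us pass from $X_\pm^{s,b-1}$ to $X_\pm^{s,-\frac14}$ at the cost of the factor $T^{\frac34-b}$ from the cut-off $\eta_T$. Combining these with \eqref{ineq: bilin est for KdV} in the case $u=v=w$ yields
$$
\|\Phi^\pm(w)\|_{X_\pm^{s,b}}\le cR_s+cT^{\frac34-b}\|w\|_{X_\pm^{0,b}}\|w\|_{X_\pm^{s,b}}.
$$
Taking $s=0$ gives the same bound with $R_0$ in place of $R_s$. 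For the difference, we write $\pa_x(w_1^2-w_2^2)=\pa_x\big((w_1+w_2)(w_1-w_2)\big)$ and apply \eqref{ineq: bilin est for KdV} at $s=0$ to get
$$
\|\Phi^\pm(w_1)-\Phi^\pm(w_2)\|_{X_\pm^{0,b}}\le cT^{\frac34-b}\big(\|w_1\|_{X_\pm^{0,b}}+\|w_2\|_{X_\pm^{0,b}}\big)\|w_1-w_2\|_{X_\pm^{0,b}}.
$$

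Next we set up the fixed point space. Let
$$
\mathcal{X}=\{w:\|w\|_{X_\pm^{0,b}}\le 2cR_0,\ \|w\|_{X_\pm^{s,b}}\le 2cR_s\},
$$
with metric $d(w_1,w_2)=\|w_1-w_2\|_{X_\pm^{0,b}}$. This space is complete by Lemma~\ref{lem: Completeness of the fixed point space}, applied in its $X_\pm^{s,b}$ version. Choose $T=(8c^2R_0)^{-1/(\frac34-b)}$, so that $cT^{\frac34-b}\cdot 2cR_0\le\frac14$. Then $\Phi^\pm$ maps $\mathcal{X}$ into itself: the $s$-norm is at most $cR_s+\frac14\cdot 2cR_s\le 2cR_s$, and likewise the $0$-norm. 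It is also a contraction with constant $\frac12$. Banach's fixed point theorem produces a fixed point $w^\pm$ with $\|w^\pm\|_{X_\pm^{s,b}}\le2cR_s$, which is \eqref{eq: uniform Xsb norm bounds for KdV}. Because $\eta_1\eta_T$-truncation is harmless on $[-T,T]$ (recall $T\le1$ since $R_0\ge1$ after possibly enlarging $c$), $w^\pm$ solves \eqref{eq: intergal KdV} there. Lemma~\ref{lem: properties of Bourgain norm}(i) gives $w^\pm\in C_t([-T,T];H^s_x)$. Note that $T$ depends only on $R_0$.

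Finally, uniqueness. Contraction gives uniqueness only within $\mathcal{X}$. For uniqueness among all $C_tH^s_x$ solutions, we would appeal to the standard $L^2$ uniqueness of KdV (Bourgain, Kenig--Ponce--Vega), or equivalently to the $s=0$ contraction in a large ball combined with a continuity argument. We expect this to be the only nontrivial point. The main technical input, the bilinear estimate \eqref{ineq: bilin est for KdV} with derivatives on a single factor, is assumed via the inequality $\la\xi\ra^s\ls\la\xi_1\ra^s+\la\xi-\xi_1\ra^s$ exactly as in the sketch of Lemma~\ref{lem: bilinear estimates}.
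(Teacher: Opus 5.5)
Your proposal is correct and follows the paper's route exactly. The paper proves this proposition by repeating the contraction argument of Proposition~\ref{prop: local in time bound for Boussinesq} with \eqref{ineq: bilin est for KdV} in place of Lemma~\ref{lem: bilinear estimates}: a ball bounded in both $X_\pm^{0,b}$ and $X_\pm^{s,b}$, the $X_\pm^{0,b}$ metric, and $T=(8c^2R_0)^{-1/(\frac34-b)}$, which is precisely what you wrote out.

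The one quibble is your uniqueness remark. The Bourgain/Kenig--Ponce--Vega $L^2$ theory and the $X_\pm^{0,b}$ contraction give uniqueness only among solutions with finite (local) $X_\pm^{0,b}$ norm, not among all $C_tH^s_x$ solutions, so calling them equivalent to unconditional uniqueness overstates what they give. The paper likewise obtains uniqueness only from the fixed point theorem, so nothing is missing relative to its argument.
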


\subsection{Long-time bound}

We next establish an exponential-in-time $H^s$ bound, which will be combined with the local approximation estimate in Section~\ref{sec: KdV Approximation and Global Extension} to prove the main theorem.

\begin{proposition}[Exponential bound]\label{prop: Hs norm bound}
Let $0<s\leq 5$ and $R_0,R_s>0$. There exist $\ep_0=\ep_0(R_0)>0$ and constants $C=C(R_0,s)$ and $K=K(R_0,s)$, independent of $\ep$, $R_s$, and $t$, such that the following holds. Let $\ep\in(0,\ep_0]$ and suppose that
\begin{equation}\label{ineq: R0, Rs assumption}
\sup_{\ep\in(0,\ep_0]}
\|\la\ep\pa_x\ra u_{\ep,0}^\pm\|_{L_x^2},
\,
\|w_0^\pm\|_{L_x^2}
\leq
R_0,
\qquad
\textup{and}
\qquad
\sup_{\ep\in(0,\ep_0]}
\|\la\ep\pa_x\ra u_{\ep,0}^\pm\|_{H_x^s},
\,
\|w_0^\pm\|_{H_x^s}
\leq
R_s.
\end{equation}
Let $u_\ep^\pm(t)$ (resp. $w^\pm(t)$) denote the $H^s(\R)$ solution to the rescaled Boussinesq system \eqref{eq: coupled Boussinesq integral form} (resp. the KdV equation \eqref{eq: intergal KdV}) with initial data $u_{\ep,0}^\pm$ (resp. $w_0^\pm$). Then these solutions are global and satisfy, for every $t\in\R$,
\begin{equation}\label{ineq: Hs norm bound}
\sum_\pm
\Big(
\|u_\ep^\pm(t)\|_{H_x^s}
+
\|w^\pm(t)\|_{H_x^s}
\Big)
\le
Ce^{K|t|}R_s.
\end{equation}
\end{proposition}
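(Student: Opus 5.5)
The plan is to iterate the local uniform bounds of Propositions~\ref{prop: local in time bound for Boussinesq} and \ref{prop: local in time bound for KdV} with a fixed time step determined by a uniform-in-time $L^2$ bound. \emph{First}, I will obtain global $\ep$-uniform $L^2$ control from the conservation laws. For KdV, $\|w^\pm(t)\|_{L^2_x}=\|w_0^\pm\|_{L^2_x}\le R_0$ by $L^2$ conservation. For the Boussinesq system I use the rescaled energy \eqref{eq: energy for rescaled Boussinesq}. Via \eqref{eq: transformed variable} and \eqref{eq: definition of u epsilon}, the quadratic part of $\mathcal E_\ep$ equals a constant multiple of $\sum_\pm\|\la\ep\pa_x\ra u_\ep^\pm(t)\|_{L^2_x}^2$ (the $\ep^4(\pa_x^{-1}\pa_t u_\ep)^2$ term becomes $\|\la\ep\pa_x\ra(u^+_\ep-u^-_\ep)\|^2$ after translation, and cross terms cancel in the sum). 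The cubic term is controlled by Gagliardo--Nirenberg:
\[
\ep^2\int|u_\ep|^3\ls \ep^2\|u_\ep\|_{L^2}^{5/2}\|\pa_xu_\ep\|_{L^2}^{1/2}\ls \ep^{3/2}\|u_\ep\|_{L^2}^{5/2}\|\ep\pa_x u_\ep\|_{L^2}^{1/2}.
\]
This is a small perturbation of the quadratic part once $\ep\le\ep_0(R_0)$. A continuity (bootstrap) argument then gives $\sum_\pm\|u_\ep^\pm(t)\|_{L^2_x}\le C_1R_0=:R_0'$ for all $t$, with $C_1$ absolute. The smallness $\ep_0$ depends only on $R_0$, which is why the statement has $\ep_0=\ep_0(R_0)$. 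Local existence plus this a priori bound also yields global existence of the $L^2$-level solutions.

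\emph{Second}, I fix $T=T(R_0')\sim (R_0')^{-1/(\frac34-b)}$ from Proposition~\ref{prop: local in time bound for Boussinesq} (and the analogous one for KdV), which depends only on $R_0$. Starting at any time $t_k=kT$ with data $u_\ep^\pm(t_k)$, the $L^2$ hypothesis holds with $R_0'$ uniformly in $k$, by the first step and time-translation invariance of the system. The system is not autonomous because of the phase $e^{\pm 2t_1\pa_x/\ep^2}$, but shifting time only composes with a translation, which preserves all norms and the bilinear estimates. Proposition~\ref{prop: local in time bound for Boussinesq} together with the embedding Lemma~\ref{lem: properties of Bourgain norm}(i) then gives
\[
\sup_{t\in[t_k,t_{k+1}]}\|u_\ep^\pm(t)\|_{H^s}\le C_0\|u_\ep^\pm(t_k)\|_{H^s},
\]
with $C_0$ independent of $k$, $\ep$, and $R_s$. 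This uses that the bound \eqref{eq: uniform Xsb norm bounds for Boussinesq 1} is linear in $R_s$, which the proof shows: the ball radius is $2cR_s$. Iterating gives $\|u_\ep^\pm(t)\|_{H^s}\le C_0^{k+1}\|u^\pm_{\ep,0}\|_{H^s}\le C_0 e^{K|t|}R_s$ with $K=T^{-1}\log C_0$, and negative times are handled symmetrically. KdV is identical, using Proposition~\ref{prop: local in time bound for KdV} and $L^2$ conservation. Summing over $\pm$ gives \eqref{ineq: Hs norm bound}.

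The main obstacle is the first step: extracting an $\ep$-uniform $L^2$ bound on $u_\ep^\pm$ from $\mathcal E_\ep$. This requires writing the energy correctly in terms of $u_\ep^\pm$ and the initial quantities $\la\ep\pa_x\ra u^\pm_{\ep,0}$ (this is why the hypothesis involves $\la\ep\pa_x\ra$), and handling the cubic term without circularity. I would first try the Gagliardo--Nirenberg bound above, noting that $\|\ep\pa_xu_\ep\|_{L^2}$ is itself part of the quadratic energy, so the bootstrap closes for $\ep^{3/2}(R_0')^{3/2}\ll1$. A secondary point is checking that the local constant $C_0$ in the second step genuinely does not depend on $R_s$. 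This follows from \eqref{ineq: fixed point condition 1} with $T$ chosen so that $cT^{\frac34-b}\cdot 2cR_0'\le\frac12$.
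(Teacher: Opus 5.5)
Your proposal is correct and follows essentially the same route as the paper. The paper first proves Lemma~\ref{lem: modified L2 conservation}, the bound $\sum_\pm\|\la\ep\pa_x\ra u_\ep^\pm(t)\|_{L^2_x}^2\le 3R_0^2$, using the same ingredients you use: the identity between the quadratic part of $\mathcal E_\ep$ and $\sum_\pm\|\la\ep\pa_x\ra u_\ep^\pm\|_{L^2_x}^2$, Gagliardo--Nirenberg on the cubic term, and a bootstrap for $\ep\le\ep_0(R_0)$. It then iterates Propositions~\ref{prop: local in time bound for Boussinesq} and~\ref{prop: local in time bound for KdV} with a step $\Delta t(R_0)$, exactly as you do. Your remark that restarting at $t_k$ only adds a fixed spatial translation in the coupling term is a detail the paper leaves implicit. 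One small slip: the bootstrap closes when $\ep^{3/2}R_0\ll1$, not $\ep^{3/2}(R_0')^{3/2}\ll1$; this is immaterial, since both conditions depend only on $R_0$.
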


To iterate the local bounds in Propositions~\ref{prop: local in time bound for Boussinesq} and~\ref{prop: local in time bound for KdV}, we require uniform-in-time control of the $L^2$ norm. For the KdV equation, this follows from the $L^2$ conservation law. On the other hand, for the rescaled Boussinesq system, the rescaled conserved energy \eqref{eq: energy for rescaled Boussinesq} provides the required $L^2$ control.

\begin{lemma}\label{lem: modified L2 conservation}
For every $R>0$, there exists $\ep_0=\ep_0(R)\in(0,1]$ such that the following holds for all $\ep\in(0,\ep_0]$. Let $u_\ep^\pm$ be a solution to the rescaled Boussinesq system \eqref{eq: coupled Boussinesq integral form} with initial data $u_{\ep,0}^\pm$ satisfying
$$
\sum_\pm
\|\la\ep\pa_x\ra u_{\ep,0}^\pm\|_{L_x^2}^2
\leq
2R^2.
$$
Then, for every $t\in\R$,
\begin{equation}\label{ineq: modified L2 conservation}
\sum_\pm
\|\la\ep\pa_x\ra u_\ep^\pm(t)\|_{L_x^2}^2
\leq
3R^2.
\end{equation}
\end{lemma}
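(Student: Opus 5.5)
The plan is to rewrite the conserved energy \eqref{eq: energy for rescaled Boussinesq} in terms of the profiles $u_\ep^\pm$. The quadratic part turns out to be exactly $\sum_\pm\|\la\ep\pa_x\ra u_\ep^\pm\|_{L_x^2}^2$. The cubic part is a perturbation of relative size $\ep^{3/2}$. A continuity (bootstrap) argument in $t$ then closes the bound.

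\emph{Step 1: the quadratic part.} Set
$$
A(t):=\sum_\pm\|\la\ep\pa_x\ra u_\ep^\pm(t)\|_{L_x^2}^2 .
$$
By \eqref{eq: transformed variable} and $\mathbf{U}^{-1}=\mathbf{U}$ we have
$$
u_\ep=\tfrac{1}{\sqrt2}(z_\ep^-+z_\ep^+),\qquad \tfrac{\ep^2}{\pa_x\la\ep\pa_x\ra}\pa_tu_\ep=\tfrac1{\sqrt2}(z_\ep^--z_\ep^+).
$$
We use two identities:
$$
\int_\R u_\ep^2+\ep^2(\pa_xu_\ep)^2\,dx=\|\la\ep\pa_x\ra u_\ep\|_{L^2}^2,
\qquad
\ep^4\|\pa_x^{-1}\pa_tu_\ep\|_{L^2}^2=\big\|\la\ep\pa_x\ra\tfrac{\ep^2}{\pa_x\la\ep\pa_x\ra}\pa_tu_\ep\big\|_{L^2}^2 .
$$
Combining them with the parallelogram law gives
$$
\tfrac12\int_\R\Big\{u_\ep^2+\ep^2(\pa_xu_\ep)^2+\ep^4(\pa_x^{-1}\pa_tu_\ep)^2\Big\}dx=\tfrac12\sum_\pm\|\la\ep\pa_x\ra z_\ep^\pm\|_{L^2}^2 .
$$
Since translations commute with $\la\ep\pa_x\ra$, \eqref{eq: definition of u epsilon} shows this equals $A(t)$. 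Hence $\mathcal E_\ep[u_\ep](t)=A(t)-\frac{\ep^2}{3}\int_\R u_\ep^3\,dx$.

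\emph{Step 2: the cubic term.} By Gagliardo--Nirenberg,
$$
\|f\|_{L^\infty}\ls\|f\|_{L^2}^{1/2}\|\pa_xf\|_{L^2}^{1/2}\le\ep^{-1/2}\|\la\ep\pa_x\ra f\|_{L^2}.
$$
Also $\|\la\ep\pa_x\ra u_\ep\|_{L^2}^2\le 2A$, because $u_\ep$ is a sum of two translates of $u_\ep^\pm$. Therefore
$$
\ep^2\Big|\int_\R u_\ep^3\,dx\Big|\le\ep^2\|u_\ep\|_{L^\infty}\|u_\ep\|_{L^2}^2\le C_*\ep^{3/2}A^{3/2}.
$$
Conservation of $\mathcal E_\ep$ then gives
$$
A(t)\le A(0)+C_*\ep^{3/2}\big(A(0)^{3/2}+A(t)^{3/2}\big).
$$

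\emph{Step 3: bootstrap.} Choose $\ep_0(R)$ so that $C_*\ep_0^{3/2}\big((2R^2)^{3/2}+(3R^2)^{3/2}\big)<\tfrac12R^2$. On any interval containing $0$ where $A\le 3R^2$, Step 2 yields $A\le 2R^2+\tfrac12R^2<3R^2$. Since $A(t)$ is continuous in $t$, the set of times where $A\le3R^2$ is open and closed, hence all of $\R$.

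\emph{Main obstacle.} The delicate step is justifying conservation of $\mathcal E_\ep$ and continuity of $A(t)$ for solutions of merely this regularity. Here $\la\ep\pa_x\ra u_\ep^\pm\in L^2$, i.e.\ $H^1$ at fixed $\ep$, and the solution must exist globally in this class. I would first prove the identity for smooth, rapidly decaying data, where the energy identity for \eqref{eq: Boussinesq} of \cite{Lin1993} applies after rescaling. I would then pass to the limit using the local well-posedness of Proposition~\ref{prop: local in time bound for Boussinesq} and continuous dependence in $X_{\ep,\pm}^{1,b}$ (for fixed $\ep$). The a priori bound \eqref{ineq: modified L2 conservation} in turn controls the $L^2$ norm that determines the lifespan, which allows the solution to be continued globally.
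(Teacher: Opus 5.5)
Your proposal is correct and follows essentially the same route as the paper. Both arguments identify the quadratic part of $\mathcal{E}_\ep$ with $\sum_\pm\|\la\ep\pa_x\ra u_\ep^\pm\|_{L_x^2}^2$ via the parallelogram law, bound the cubic term by $\ep^{3/2}A^{3/2}$ using Gagliardo--Nirenberg, and close with a continuity (bootstrap) argument in $t$. The differences are cosmetic: you read the identity off the $z_\ep^\pm$ variables instead of differentiating \eqref{decomposition relation}, you use an $L^\infty$ rather than an $L^3$ interpolation, and you bootstrap at $3R^2$ instead of $4R^2$; your remarks on justifying energy conservation and global existence by approximation address points the paper leaves implicit.
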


\begin{proof}
We claim that the quadratic part of the conserved energy $\mathcal{E}_\ep[u_\ep(t)]$ can be written as
\begin{equation}\label{eq: energy quadratic term}
\frac12
\big\|
\la\ep\pa_x\ra u_\ep
\big\|_{L_x^2}^2
+
\frac12
\big\|
\ep^2\pa_x^{-1}\pa_tu_\ep
\big\|_{L_x^2}^2
=
\sum_\pm
\|\la\ep\pa_x\ra u_\ep^\pm(t)\|_{L_x^2}^2.
\end{equation}
Indeed, differentiating \eqref{decomposition relation} with respect to $t$ and using \eqref{eq: coupled Boussinesq}, we obtain
$$
\begin{aligned}
\ep^2\pa_x^{-1}\partial_tu_\epsilon(t,x)
&=
\sum_\pm
\ep^2\pa_x^{-1}
\bigg\{
\partial_tu_\epsilon^\pm
\mp
\frac1{\epsilon^2}
\partial_xu_\epsilon^\pm
\bigg\}
\bigg(
t,
x\mp\frac{t}{\epsilon^2}
\bigg)
\\
&=
\sum_\pm
\bigg\{
\mp
\la\ep\partial_x\ra
u_\ep^\pm
\bigg(
t,
x\mp\frac{t}{\epsilon^2}
\bigg)
\pm
\frac{\ep^2}{2}
\frac1{\la\ep\partial_x\ra}
u_\ep^2(t,x)
\bigg\}
\\
&=
\sum_\pm
\mp
\la\ep\partial_x\ra
u_\ep^\pm
\bigg(
t,
x\mp\frac{t}{\epsilon^2}
\bigg)=
\sum_\pm
\mp
\la\ep\partial_x\ra
e^{\mp\frac{t}{\ep^2}\pa_x}
u_\ep^\pm(t,x).
\end{aligned}
$$
On the other hand, by \eqref{decomposition relation}, we have
$$
\la\ep\pa_x\ra u_\ep
=
\sum_\pm
\la\ep\partial_x\ra
u_\ep^\pm
\bigg(
t,
x\mp\frac{t}{\epsilon^2}
\bigg)
=
\sum_\pm
\la\ep\partial_x\ra
e^{\mp\frac{t}{\ep^2}\pa_x}
u_\ep^\pm(t,x).
$$
Therefore, \eqref{eq: energy quadratic term} follows from the identity
$\|a+b\|_{L^2}^2+\|a-b\|_{L^2}^2=2\|a\|_{L^2}^2
+2\|b\|_{L^2}^2.$

By the claim \eqref{eq: energy quadratic term} and the conservation of energy, we obtain
$$
\begin{aligned}
\sum_\pm
\|\la\ep\pa_x\ra u_\ep^\pm(t)\|_{L_x^2}^2
&=
\mathcal{E}_\ep[u_\ep(t)]
+
\frac{\ep^2}{3}
\int_\R
u_\ep^3(t)\,dx=
\mathcal{E}_\ep[u_\ep(0)]
+
\frac{\ep^2}{3}
\int_\R
u_\ep^3(t)\,dx
\\
&=
\sum_\pm
\|\la\ep\pa_x\ra u_{\ep,0}^\pm\|_{L_x^2}^2
-
\frac{\ep^2}{3}
\int_\R
u_\ep^3(0)\,dx
+
\frac{\ep^2}{3}
\int_\R
u_\ep^3(t)\,dx
\\
&\le
2R^2
+
\frac{\ep^2}{3}
\Big(
\|u_\ep(t)\|_{L_x^3}^3
+
\|u_\ep(0)\|_{L_x^3}^3
\Big),
\end{aligned}
$$
where we used the assumption on the initial data. Next, to estimate the cubic terms, we apply the 1D Gagliardo--Nirenberg inequality:
$$
\begin{aligned}
\ep^2\|u_\ep\|_{L_x^3}^3
&\lesssim
\sum_\pm\ep^2\|u_\ep^\pm\|_{L_x^3}^3
\lesssim
\sum_\pm
\ep^{\frac32}\|u_\ep^\pm\|_{L_x^2}^{\frac52}
\Big(\ep\|\pa_xu_\ep^\pm\|_{L_x^2}\Big)^{\frac12}
\lesssim
\ep^{\frac32}
\Biggl(
\sum_\pm
\|\la\ep\pa_x\ra
u_\ep^\pm\|_{L_x^2}^2
\Biggr)^{\frac32}.
\end{aligned}
$$
Therefore, combining the above estimate with the previous inequality gives
$$
\sum_\pm
\|\la\ep\pa_x\ra u_\ep^\pm(t)\|_{L_x^2}^2
\le
2R^2
+
C\ep^{\frac32}
\Biggl(
\sum_\pm
\|\la\ep\pa_x\ra u_\ep^\pm(t)\|_{L_x^2}^2
\Biggr)^{\frac32}
+
C\ep^{\frac32}
2\sqrt{2}R^3.
$$

Assume that
$$
\sum_\pm
\|\la\ep\pa_x\ra u_\ep^\pm(t)\|_{L_x^2}^2
\leq4R^2.
$$
Then, we have
$$
\sum_\pm
\|\la\ep\pa_x\ra u_\ep^\pm(t)\|_{L_x^2}^2
\leq
2R^2+(8+2\sqrt2)C\ep^{\frac32}R^3
\leq3R^2,
$$
provided that $\ep>0$ is sufficiently small, depending only on $R$. Therefore, by continuity of solution in time, a standard bootstrap argument yields \eqref{ineq: modified L2 conservation}.
\end{proof}

\begin{proof}[Proof of Proposition~\ref{prop: Hs norm bound}]
Let $\ep_0=\ep_0(R_0)$ be the constant given by Lemma~\ref{lem: modified L2 conservation}. Then, for every $t\in\R$, the $L^2$ conservation law for the KdV equation together with Lemma~\ref{lem: modified L2 conservation} yields
\begin{equation}\label{ineq: uniform L2 bound}
\|w^\pm(t)\|_{L_x^2}
=
\|w_0^\pm\|_{L_x^2}
\le
R_0,
\qquad
\|u_\ep^\pm(t)\|_{L_x^2}^2
\le
\sum_\pm
\|\la\ep\pa_x\ra u_\ep^\pm(t)\|_{L_x^2}^2
\le
3R_0^2.
\end{equation}
Hence, the $L^2$ norms of both solutions remain uniformly bounded by $\sqrt3R_0$. On the other hand, applying Propositions~\ref{prop: local in time bound for Boussinesq} and~\ref{prop: local in time bound for KdV}, there exist
$$
\Delta t=\Delta t(R_0)>0,
\qquad
C=C(R_0,s)\ge1,
$$
independent of $\ep$ and $R_s$, such that for every $t\in\R$,
\begin{equation}\label{ineq: iteration step}
\sum_\pm
\|u_\ep^\pm(t+\Delta t)\|_{H_x^s}
\le
C
\sum_\pm
\|u_\ep^\pm(t)\|_{H_x^s},
\qquad
\sum_\pm
\|w^\pm(t+\Delta t)\|_{H_x^s}
\le
C
\sum_\pm
\|w^\pm(t)\|_{H_x^s}.
\end{equation}
Therefore, iterating \eqref{ineq: iteration step} yields
$$
\sum_\pm
\Big(
\|u_\ep^\pm(t)\|_{H_x^s}
+
\|w^\pm(t)\|_{H_x^s}
\Big)
\leq
Ce^{K|t|}R_s,
$$
for some constant $K=K(R_0,s)>0$, which proves the proposition.
\end{proof}

\section{KdV Approximation and Global Extension}\label{sec: KdV Approximation and Global Extension}

In this section, we complete the proof of the main theorem (Theorem~\ref{thm: main theorem}). We first refine the local-in-time approximation result of Hong and Yang \cite{HY2024} into a form suitable for the iteration argument. More precisely, by combining the persistence of regularity argument with Propositions~\ref{prop: local in time bound for Boussinesq} and~\ref{prop: local in time bound for KdV}, we obtain a local approximation estimate whose lifespan depends only on the lower $L^2$ norm. We then iterate this local result to complete the proof.

\subsection{Frequency-localized decomposition}\label{subsec: Frequency localized decomposition}
To prove the local KdV approximation without imposing additional regularity on the initial data, we use the frequency-localized decomposition introduced by Hong and Yang \cite{HY2024}, which avoids a direct high-frequency comparison between the Boussinesq and Airy flows.

As mentioned in Remark~\ref{rmk: Sensitivity of the Bourgain norm}, the direct comparison of two solutions $u_\ep^\pm$ and $w^\pm$ in a single Bourgain norm may require additional regularity. Indeed, for $|\xi|\gg \ep^{-1}$,
$$
s_\ep(\xi)\sim \frac{|\xi|\xi}{\ep}\quad \textup{and}\quad s(\xi)=\frac{\xi^3}{2},
$$
so the rescaled Boussinesq and Airy flows have completely different high-frequency behavior.

To overcome this difficulty, Hong and Yang introduced the following \textit{frequency-localized decoupled Boussinesq equation}, which we use here without modification:
\begin{equation}\label{eq: auxiliary equation}
\boxed{\quad v_{\ep}^{\pm}(t)
=S_\ep^{\pm}(t)P_{\leq N_0}u_{\epsilon,0}^{\pm} \pm \frac{1}{2}\int_0^tS_\ep^{\pm}(t-t_1)  P_{\leq N_0}\frac{\pa_x }{\la\ep \pa_x\ra}(P_{\leq N_0}v_\ep^\pm (t_1))^2   dt_1,\quad}
\end{equation}
where 
$$
N_0=\frac{1}{2}\ep^{-\frac{2}{5}},
$$
and $P_{\leq N_0}$ is the sharp frequency projection defined by $\widehat{P_{\leq N_0}f}(\xi)=\mathbf{1}_{\{|\xi|\leq N_0\}}\hat{f}(\xi)$.

The cut-off frequency $N_0$ is chosen so that the Boussinesq and Airy phases are comparable on $|\xi|\leq N_0$, while the complementary high-frequency part can be controlled by the $H^s$-regularity. Indeed, Taylor expansion gives
\begin{equation}\label{ineq: phase difference}
|s_\ep(\xi)-s(\xi)|
\ls
\ep^2|\xi|^5
\ls
\ep^{\frac{2s}{5}}|\xi|^s
\quad
\textup{where}
\quad
|\xi|\leq N_0
\quad \textup{and}\quad 0\leq s\leq5.
\end{equation}
The same choice of $N_0$ yields
$$
\|(1-P_{\leq N_0})f\|_{L_x^2}
\ls
N_0^{-s}\|f\|_{H_x^s}
\ls
\ep^{\frac{2s}{5}}\|f\|_{H_x^s}.
$$
Thus, the low-frequency phase error and high-frequency truncation both contribute the same order $\ep^{\frac{2s}{5}}$. The following estimates of Hong and Yang quantify low-frequency comparison.
\begin{lemma}[Low-frequency comparison estimates \cite{HY2024}]\label{lem: Low frequency comparison estimates}
Let $N_0=\frac{1}{2}\ep^{-\frac{2}{5}}$ and $P_{\leq N_0}$ be the sharp frequency cut-off. Set $\eta_T(t)=\eta(t/T)$ where $\eta\in C_c^\infty(\R)$ is the smooth time cut-off defined in \eqref{eq: smooth cutoff}. Then, for $s,b\in \R$ and $0<T\leq1$, one has
\begin{equation}\label{ineq: low freq est 1}
\|P_{\leq N_0}f\|_{X_{\pm}^{s,b}}
\sim 
\|P_{\leq N_0}f\|_{X_{\ep,\pm}^{s,b}}.
\end{equation}
If we further assume $0\leq s\leq 5$ and $\frac{1}{2}<b\leq 1$, we have
\begin{equation}\label{ineq: low freq est 2}
\|\eta_T(t)(S_\ep^\pm(t)-S^\pm(t))P_{\leq N_0}u_0\|_{X_{\pm}^{0,b}}
\ls
\ep^{\frac{2s}{5}}T^{\frac{3}{2}-b}\|u_0\|_{H_x^s},
\end{equation}
and
\begin{equation}\label{ineq: low freq est 3}
\bigg\|
\eta_T(t)\int_0^t\big(S_\ep^\pm(t-t')-S^\pm(t-t')\big)\eta_T(t')(P_{\leq N_0}F)(t')\, dt'
\bigg\|_{X_{\pm}^{0,b}}
\ls
\ep^{\frac{2s}{5}}T^{\frac{3}{2}-b}\|F\|_{X_{\pm}^{s,b-1}}.
\end{equation}
\end{lemma}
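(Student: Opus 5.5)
The first estimate \eqref{ineq: low freq est 1} is a direct comparison of the modulation weights on the support $|\xi|\le N_0$. Since the $\la\xi\ra^s$ weight is common to both norms, it suffices to show
$$
\la\tau\pm s_\ep(\xi)\ra\sim\la\tau\pm s(\xi)\ra \qquad \textup{uniformly for } |\xi|\le N_0,
$$
because both norms then integrate $\la\xi\ra^{2s}|\tilde f|^2$ against comparable weights. By the Taylor bound \eqref{ineq: phase difference}, $|s_\ep(\xi)-s(\xi)|\ls\ep^2|\xi|^5\le\ep^2N_0^5=2^{-5}$ there. The elementary inequality $\la a+h\ra\le\sqrt2\la a\ra\la h\ra$ with $|h|\ls1$ then gives the comparability in both directions, for every $s,b\in\R$.

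For \eqref{ineq: low freq est 2}, I would take the spatial Fourier transform. Writing $\hat u_0$ for the data restricted to $|\xi|\le N_0$, the function is $\eta_T(t)e^{\mp its(\xi)}\big(e^{\mp it\delta(\xi)}-1\big)\hat u_0(\xi)$ with $\delta=s_\ep-s$. The $X^{0,b}_\pm$ norm then equals $\|\,\|g_\xi\|_{H^b_t}\hat u_0(\xi)\|_{L^2_\xi}$, where $g_\xi(t)=\eta_T(t)(e^{\mp it\delta}-1)$. I would expand
$$
e^{\mp it\delta}-1=\sum_{k\ge1}\frac{(\mp i\delta t)^k}{k!}
$$
and use $\|\eta_T(t)t^k\|_{H^b_t}\ls C^kT^{k+\frac12-b}$ for $T\le1$, $b\le1$ (by scaling, since $\|t^k\eta(t/T)\|_{H^b}\le T^{k}T^{1/2-b}\|\tau^k\eta\|_{H^b}$ up to constants). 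This yields $\|g_\xi\|_{H^b}\ls|\delta(\xi)|T^{\frac32-b}$, using $|\delta|\ls1$ to sum the series. Then $|\delta(\xi)|\ls\ep^{2s/5}\la\xi\ra^s$ from \eqref{ineq: phase difference} gives the claim. The factor $|\delta|\,T$ is what produces the extra power $T^{3/2-b}$ compared with Lemma~\ref{lem: properties of Bourgain norm}(ii).

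For \eqref{ineq: low freq est 3}, I would first write the Duhamel difference as
$$
S^\pm(t)\int_0^t S^\pm(-t')\big(e^{\mp i(t-t')\delta(-i\pa_x)}-1\big)\eta_T(t')P_{\le N_0}F(t')\,dt'.
$$
I would then expand $e^{\mp i(t-t')\delta}-1=\sum_{k\ge1}\frac{(\mp i\delta)^k}{k!}\sum_{j}\binom kj t^j(-t')^{k-j}$, which reduces matters to terms of the form $t^jS^\pm(t)\int_0^tS^\pm(-t')(t')^{k-j}\delta^k\eta_T(t')P_{\le N_0}F\,dt'$. Each term is handled by Lemma~\ref{lem: properties of Bourgain norm}(iv) in a modified form. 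The factor $t^j$ is absorbed into the cutoff by replacing $\eta_T(t)$ with $\eta_T(t)t^j=T^j\tilde\eta_T(t)$, where $\tilde\eta(t)=t^j\eta(t)$. Likewise $(t')^{k-j}\eta_T(t')=T^{k-j}\tilde\eta_T(t')$, and multiplication by $\tilde\eta_T(t')$ is bounded on $X^{0,b-1}$ via Lemma~\ref{lem: properties of Bourgain norm}(iii), since $b-1\in(-\frac12,0]$. Altogether each term is $\ls C^kT^k\,T^{\frac12-b}\|\delta^kP_{\le N_0}F\|_{X^{0,b-1}_\pm}$. Using $|\delta|^k\ls|\delta|\ls\ep^{2s/5}\la\xi\ra^s$ and summing over $k$ (the $1/k!$ beats $2^kC^k$) gives $\ep^{2s/5}T^{\frac32-b}\|F\|_{X^{s,b-1}_\pm}$.

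The main obstacle is this last step. One must check that the cutoff-stability bound with polynomial weights holds uniformly, with constants growing at most geometrically in $k$, and that the inhomogeneous estimate (iv) tolerates the modified cutoff $\tilde\eta$, whose support is still in $[-2T,2T]$. I would verify this by rerunning the standard proof of (iv) with $\eta$ replaced by $t^j\eta(t)$, tracking that the constants are $\ls C^j$.
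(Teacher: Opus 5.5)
The paper does not prove this lemma; it imports it from \cite{HY2024}, so there is no in-text argument to compare against. Judged on its own, your proof is correct.

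The uniformity you rely on is explicit: $s_\ep(\xi)-s(\xi)=-\frac{\ep^2\xi^5}{2(\la\ep\xi\ra+1)^2}$, so $|\delta(\xi)|\le 2^{-8}$ on $|\xi|\le N_0$. This gives the two-sided weight comparison in \eqref{ineq: low freq est 1}, with constants depending only on $|b|$. It also lets you sum the exponential series using $|\delta|^k\le|\delta|$. Your series argument for \eqref{ineq: low freq est 2} is the Taylor-expanded form of the usual Fourier-side argument: apply the mean value theorem to $\hat\eta_T(\tau\pm s_\ep(\xi))-\hat\eta_T(\tau\pm s(\xi))$ and use $|\partial_\sigma\hat\eta_T(\sigma)|\lesssim T^2\la T\sigma\ra^{-2}$. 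Both routes give exactly $|\delta(\xi)|\,T^{\frac32-b}$.

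The step you flag in \eqref{ineq: low freq est 3} is routine. The constants in the standard proofs of Lemma~\ref{lem: properties of Bourgain norm}(iii)--(iv) depend only on a few norms of the cutoff $\psi$, such as $\|\psi\|_{W^{1,\infty}}$, $\|\psi\|_{H^1}$ and $\sum_{m\ge1}\|t^m\psi\|_{H^1}/m!$. For $\psi=t^j\eta$ these are $\lesssim(j+1)2^j$. Hence the $1/k!$ absorbs the growth, and the $k=1$ term gives precisely $T\cdot T^{\frac12-b}$.

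Rerunning (iv) with the modified cutoff, as you propose, is the right choice. If you instead peeled $\tilde\eta_T$ off as a multiplier on $X^{0,b}_\pm$, it would cost an extra factor $T^{\frac12-b}$. Combined with (iv) as stated in the paper, that would give only $T^{2-2b}$, which is weaker than $T^{\frac32-b}$.
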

Since $v_\ep^\pm$ is supported in $|\xi|\leq N_0$, \eqref{ineq: low freq est 1} gives
\begin{equation}
\|v_\ep^\pm\|_{X_{\ep,\pm}^{s,b}}
\sim
\|v_\ep^\pm\|_{X_{\pm}^{s,b}}.
\end{equation}
Moreover, arguing as in the proof of Proposition~\ref{prop: local in time bound for Boussinesq}, we obtain the following local uniform bound with persistence of regularity for the auxiliary equation \eqref{eq: auxiliary equation}. This is because the auxiliary equation has the same structure as the rescaled Boussinesq system \eqref{eq: coupled Boussinesq integral form} but fewer nonlinear terms, and the frequency cut-off $P_{\leq N_0}$ only makes the terms smaller. 
\begin{proposition}[Local uniform bounds for the auxiliary equation]\label{prop: local in time bound for auxiliary equation}
Let $0\leq s\leq 5$ and $\frac{1}{2}<b<\frac{3}{4}$. Suppose that there exist $R_0, R_s\geq1$ such that
$$
\sup_{\ep\in(0,1]}\|u_{\ep,0}^{\pm}\|_{L_x^2}\leq R_0,
\qquad
\sup_{\ep\in(0,1]}\|u_{\ep,0}^{\pm}\|_{H_x^s}\leq R_s.
$$
Then, there exists a time $T=T(R_0)\sim R_0^{-1/(\frac{3}{4}-b)}$, independent of $\ep\in(0,1]$ and $R_s$, and a unique solution $v_\ep^\pm\in C_t([-T,T];H_x^s)$ to the frequency-localized decoupled Boussinesq equation \eqref{eq: auxiliary equation} with initial data $P_{\leq N_0}u_{\ep,0}^{\pm}$ such that
\begin{equation}\label{eq: uniform Xsb norm bounds for auxiliary}
\sup_{\ep\in(0,1]}\|v_\ep^\pm\|_{X_{\pm}^{s,b}}
\sim
\sup_{\ep\in(0,1]}\|v_\ep^\pm\|_{X_{\ep,\pm}^{s,b}}
\ls
R_s.
\end{equation}
\end{proposition}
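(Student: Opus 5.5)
The plan is to repeat the contraction argument in the proof of Proposition~\ref{prop: local in time bound for Boussinesq}, now for the decoupled auxiliary map
$$
\Psi_\ep^\pm(v)
:=
\eta_1(t)S_\ep^\pm(t)P_{\le N_0}u_{\ep,0}^\pm
\pm\frac{\eta_1(t)}{2}\int_0^t S_\ep^\pm(t-t_1)\eta_T(t_1)P_{\le N_0}\frac{\pa_x}{\la\ep\pa_x\ra}\big(P_{\le N_0}v(t_1)\big)^2\,dt_1 .
$$
Since the system \eqref{eq: auxiliary equation} decouples, each sign $\pm$ is treated separately, and only the $X_{\ep,\pm}^{s,b}$ norms are used. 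The first step is the two observations that make the argument go through. The sharp projection $P_{\le N_0}$ is a Fourier multiplier with symbol bounded by $1$ and commutes with $S_\ep^\pm$, so $\|P_{\le N_0}f\|_{X_{\ep,\pm}^{s,b}}\le\|f\|_{X_{\ep,\pm}^{s,b}}$ for all $s,b$. Also $\|P_{\le N_0}u_{\ep,0}^\pm\|_{H^s_x}\le R_s$ and $\|P_{\le N_0}u_{\ep,0}^\pm\|_{L^2_x}\le R_0$.

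The second step is the estimate for $\Psi_\ep^\pm$. By Lemma~\ref{lem: properties of Bourgain norm} (ii) and (iv), followed by (iii) with $b-1\le -\frac14$ (which gains $T^{\frac34-b}$), we get
$$
\|\Psi_\ep^\pm(v)\|_{X_{\ep,\pm}^{s,b}}\le cR_s+cT^{\frac34-b}\Big\|\frac{\pa_x}{\la\ep\pa_x\ra}(P_{\le N_0}v)^2\Big\|_{X_{\ep,\pm}^{s,-\frac14}}.
$$
Here we have discarded the outer $P_{\le N_0}$. Applying the refined bilinear estimate \eqref{ineq: bilin est for Boussinesq 1} with $u=v=P_{\le N_0}v$, and then discarding the inner projections, gives
$$
\|\Psi_\ep^\pm(v)\|_{X_{\ep,\pm}^{s,b}}\le cR_s+cT^{\frac34-b}\|v\|_{X_{\ep,\pm}^{0,b}}\|v\|_{X_{\ep,\pm}^{s,b}}.
$$
The same computation at $s=0$ bounds the $X^{0,b}$ norm by $cR_0+cT^{\frac34-b}\|v\|_{X^{0,b}}^2$. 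For differences we use $a^2-b^2=(a-b)(a+b)$ with \eqref{ineq: bilin est for Boussinesq 1} at $s=0$:
$$
\|\Psi_\ep^\pm(v_1)-\Psi_\ep^\pm(v_2)\|_{X_{\ep,\pm}^{0,b}}\le cT^{\frac34-b}\big(\|v_1\|_{X_{\ep,\pm}^{0,b}}+\|v_2\|_{X_{\ep,\pm}^{0,b}}\big)\|v_1-v_2\|_{X_{\ep,\pm}^{0,b}}.
$$
None of the constants depend on $\ep$.

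The third step is to work on the set $\{\|v\|_{X^{0,b}_{\ep,\pm}}\le 2cR_0,\ \|v\|_{X^{s,b}_{\ep,\pm}}\le 2cR_s\}$ with the $X^{0,b}_{\ep,\pm}$ metric, which is complete by Lemma~\ref{lem: Completeness of the fixed point space}. Choosing $T=(8c^2R_0)^{-1/(\frac34-b)}$, so that $cT^{\frac34-b}\cdot 4cR_0\le\frac12$, makes $\Psi_\ep^\pm$ a self-map and a contraction. The lifespan therefore depends only on $R_0$, and not on $R_s$ or $\ep$. The fixed point agrees with a solution of \eqref{eq: auxiliary equation} on $[-T,T]$, where $\eta_1=\eta_T=1$ since $T\le1$. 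It lies in $C_t([-T,T];H^s_x)$ by the embedding (i), and it is supported in $|\xi|\le N_0$: the linear part is, and the Duhamel term carries the outer $P_{\le N_0}$. Uniqueness in the class follows from the contraction, and uniqueness among all $C_tH^s$ solutions follows from the usual argument in the $L^2$-level metric.

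The final step converts norms. Since $v_\ep^\pm=P_{\le N_0}v_\ep^\pm$, the equivalence \eqref{ineq: low freq est 1} gives $\|v_\ep^\pm\|_{X^{s,b}_\pm}\sim\|v_\ep^\pm\|_{X^{s,b}_{\ep,\pm}}\lesssim R_s$, which is \eqref{eq: uniform Xsb norm bounds for auxiliary}. The only point needing care is making sure the sharp cutoffs do not interfere with the bilinear estimate and the cut-off lemmas. This is harmless, since all norms are weighted $L^2$ norms on the Fourier side and sharp projections are contractions there. No other obstacle is expected.
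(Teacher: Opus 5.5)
Your proposal is correct and is essentially the argument the paper gives. The paper only remarks that the auxiliary equation has the same structure as the rescaled Boussinesq system, with fewer nonlinear terms and harmless sharp cut-offs, so the contraction of Proposition~\ref{prop: local in time bound for Boussinesq} applies; you fill this in with the two-norm ball under the $X^{0,b}_{\ep,\pm}$ metric, the choice $T=(8c^2R_0)^{-1/(\frac34-b)}$, and \eqref{ineq: low freq est 1} to pass to $X^{s,b}_\pm$, exactly as the paper intends.
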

As in \cite{HY2024}, with the auxiliary solution $v_\ep^\pm$ constructed above, we estimate $u_\ep^\pm-v_\ep^\pm$ in $X_{\ep,\pm}^{0,b}$ and $v_\ep^\pm-w^\pm$ in $X_\pm^{0,b}$ separately, instead of comparing $u_\ep^\pm$ and $w^\pm$ directly. This approach allows us to establish the local justification without imposing additional regularity requirement on the initial data.

\subsection{Local approximation with persistence of regularity}

We now refine the local KdV approximation of Hong and Yang \cite{HY2024} into a form suitable for the iteration argument. As in the local theory developed in Section~\ref{sec: Uniform bounds for the Boussinesq system}, the lifespan of the approximation depends only on the lower $L^2$ norm, while the higher $H^s$ norm enters only through the approximation error. Since the $L^2$ norm is conserved for the KdV equation and remains uniformly bounded for the rescaled Boussinesq system by Lemma~\ref{lem: modified L2 conservation}, this local approximation can be iterated.

\begin{proposition}[Local KdV approximation with persistence of regularity]\label{prop: KdV to Boussinesq local result}
Let $0\le s\le5$ and suppose that
$$
\sup_{\ep\in(0,1]}
\|u_{\ep,0}^\pm\|_{L_x^2},\,
\|w_0^\pm\|_{L_x^2}
\leq
R_0,
\qquad
\textup{and}
\qquad
\sup_{\ep\in(0,1]}
\|u_{\ep,0}^\pm\|_{H_x^s},
\,
\|w_0^\pm\|_{H_x^s}
\leq
R_s.
$$
For each $\ep\in(0,1]$, let $u_\ep^\pm(t)$ (resp. $w^\pm(t)$) denote the unique $H^s$-solution to the rescaled Boussinesq system \eqref{eq: coupled Boussinesq integral form}
(resp. the KdV equation \eqref{eq: intergal KdV}) with initial data $u_{\ep,0}^\pm$ (resp. $w_0^\pm$). Then, there exists a time $T=T(R_0)>0$ and a constant $C_s=C_s(R_0)>1$, both independent of $\ep$ and $R_s$, such that
\begin{equation}\label{ineq: Boussinesq to KdV local difference}
\|u_\ep^\pm-w^\pm\|_{C_t([-T,T];L_x^2)}
\leq
C_s\|u_{\ep,0}^\pm-w_0^\pm\|_{L_x^2}
+
C_sR_s\ep^{\min\{\frac{2s}{5},\frac12\}}.
\end{equation}
\end{proposition}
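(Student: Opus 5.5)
We follow the decomposition of Section~\ref{subsec: Frequency localized decomposition} and split
$$
u_\ep^\pm-w^\pm=(u_\ep^\pm-v_\ep^\pm)+(v_\ep^\pm-w^\pm),
$$
where $v_\ep^\pm$ is the auxiliary solution of \eqref{eq: auxiliary equation}. We take $T=T(R_0)$ to be the minimum of the lifespans in Propositions~\ref{prop: local in time bound for Boussinesq}, \ref{prop: local in time bound for KdV} and \ref{prop: local in time bound for auxiliary equation}, possibly shrunk further by a factor depending only on $R_0$. On $[-T,T]$ we then have $X^{0,b}$ bounds of size $\ls R_0$ and $X^{s,b}$ bounds of size $\ls R_s$ for all three solutions. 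All differences will be measured in the $s=0$ Bourgain norms, and the embedding of Lemma~\ref{lem: properties of Bourgain norm}(i) converts these to $C_t([-T,T];L^2_x)$ bounds. The structural point is the one used throughout the paper. Every nonlinear difference term is estimated with the refined bilinear estimates \eqref{ineq: bilin est for Boussinesq 1}--\eqref{ineq: bilin est for Boussinesq 3} and \eqref{ineq: bilin est for KdV} at $s=0$. The factor multiplying the unknown difference then involves only $X^{0,b}$ norms, of size $\ls R_0$, so it can be absorbed using $T^{\frac34-b}R_0\ll1$. The $H^s$ norms appear only in the forcing terms, which carry the small factors $\ep^{2s/5}$ or $\ep^{1/2}$.

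\textbf{Step 1: $u_\ep^\pm-v_\ep^\pm$ in $X_{\ep,\pm}^{0,b}$.} We subtract the Duhamel formulas \eqref{eq: coupled Boussinesq integral form} and \eqref{eq: auxiliary equation}, with time cut-offs as in the proof of Proposition~\ref{prop: local in time bound for Boussinesq}, and collect three kinds of terms.

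First, the initial-data term $S_\ep^\pm(1-P_{\le N_0})u_{\ep,0}^\pm$ and the high-frequency nonlinear term $(1-P_{\le N_0})\frac{\pa_x}{\la\ep\pa_x\ra}(u_\ep^\pm)^2$. Both are of size $\ep^{2s/5}$ times $R_s$ (the latter times $R_0R_s$). For the nonlinear one we use $\|(1-P_{\le N_0})F\|_{X^{0,-1/4}}\ls N_0^{-s}\|F\|_{X^{s,-1/4}}$ together with \eqref{ineq: bilin est for Boussinesq 1}.

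Second, the coupling terms involving $e^{\pm\frac{2t}{\ep^2}\pa_x}u_\ep^\mp$. Applying \eqref{ineq: bilin est for Boussinesq 2}--\eqref{ineq: bilin est for Boussinesq 3} with target index $0$ and $s'=s$, these are of size $\ep^{\min\{s,1/2\}}R_0R_s\le\ep^{\min\{2s/5,1/2\}}R_0R_s$ for $\ep\le1$.

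Third, the difference $(u_\ep^\pm)^2-(P_{\le N_0}v_\ep^\pm)^2$. We write $P_{\le N_0}u_\ep^\pm-P_{\le N_0}v_\ep^\pm$ plus the high-frequency remainder $(1-P_{\le N_0})u_\ep^\pm$. The first part is bounded by $cT^{\frac34-b}R_0\|u_\ep^\pm-v_\ep^\pm\|_{X^{0,b}_{\ep,\pm}}$ and is absorbed. The remainder contributes another $\ep^{2s/5}R_0R_s$.

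\textbf{Step 2: $v_\ep^\pm-w^\pm$ in $X_\pm^{0,b}$.} We insert the Airy propagator and write
$$
v_\ep^\pm-w^\pm = (S_\ep^\pm-S^\pm)(t)P_{\le N_0}u^\pm_{\ep,0} + \big(\text{Duhamel term with } S_\ep^\pm-S^\pm\big) + \big(\text{Airy-flow difference terms}\big).
$$
The first two pieces are handled by \eqref{ineq: low freq est 2}--\eqref{ineq: low freq est 3}. This is where the persistence refinement is essential: the forcing $F=P_{\le N_0}\frac{\pa_x}{\la\ep\pa_x\ra}(P_{\le N_0}v_\ep^\pm)^2$ has $\|F\|_{X^{s,b-1}_\pm}\ls R_0R_s$ by \eqref{ineq: low freq est 1} and \eqref{ineq: bilin est for Boussinesq 1}. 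These pieces therefore contribute $\ep^{2s/5}R_0R_s$ rather than $\ep^{2s/5}R_s^2$. The Airy-flow terms are handled as follows.

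The initial data give $S^\pm(t)(P_{\le N_0}u_{\ep,0}^\pm-w_0^\pm)$. This is bounded by $\|u_{\ep,0}^\pm-w_0^\pm\|_{L^2}+N_0^{-s}\|w_0^\pm\|_{H^s}$.

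The multiplier error $\frac{1}{\la\ep\pa_x\ra}-1$ on $|\xi|\le N_0$ is $O(\ep^2N_0^2)=O(\ep^{6/5})$, which is harmless.

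The truncation errors $(1-P_{\le N_0})$ acting on $w^\pm$ or on $\pa_x(w^\pm)^2$ are of order $\ep^{2s/5}R_0R_s$, by the same mechanism as in Step~1.

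The genuine difference $(P_{\le N_0}v_\ep^\pm)^2-(w^\pm)^2$ is controlled via \eqref{ineq: bilin est for KdV} at $s=0$ by $T^{\frac34-b}R_0\|v_\ep^\pm-w^\pm\|_{X^{0,b}_\pm}$ plus truncation terms, and the first of these is absorbed.

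\textbf{Step 3: conclusion.} Combining the two steps gives
$$
\|u_\ep^\pm-w^\pm\|_{C_tL^2_x}\le C(R_0)\Big(\|u_{\ep,0}^\pm-w_0^\pm\|_{L^2}+R_s\ep^{\min\{2s/5,1/2\}}\Big).
$$
To move from the cut-off versions to the true solutions on $[-T,T]$, we use the uniqueness statements in the local propositions.

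The main obstacle is bookkeeping rather than analysis. One must check that in every cross term the $s$-derivatives can be placed on a single factor, which the refined Lemma~\ref{lem: bilinear estimates} allows. Then every error is linear in $R_s$ and the absorption threshold depends only on $R_0$. A second point requiring care is the mixed norms: $u_\ep^\pm-v_\ep^\pm$ lives in $X_{\ep,\pm}$, while $v_\ep^\pm-w^\pm$ lives in $X_\pm$. This is consistent only because $v_\ep^\pm$ is frequency-localized and \eqref{ineq: low freq est 1} applies to it. The final comparison should therefore be made in $C_tL^2_x$, where both norms embed.
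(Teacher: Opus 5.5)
Your proposal is correct and follows the paper's proof essentially step for step. Both arguments split through the frequency-localized auxiliary solution $v_\ep^\pm$, estimate $u_\ep^\pm-v_\ep^\pm$ in $X_{\ep,\pm}^{0,b}$ and $v_\ep^\pm-w^\pm$ in $X_\pm^{0,b}$, absorb differences via $T^{\frac34-b}R_0\ll1$, and use the refined bilinear estimates so that every forcing term is linear in $R_s$. The only differences are cosmetic: you pair $S_\ep^\pm-S^\pm$ with the $v_\ep^\pm$-nonlinearity where the paper uses $(w^\pm)^2$, and you bound the multiplier error by $\ep^{6/5}$ instead of $\ep^{\frac{2s}{5}}\la\xi\ra^s$ (fine, as long as one factor is placed in $X^{s,b}$ so the bound stays linear in $R_s$).
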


\begin{proof}
Let $\frac12<b<\frac34$. By Propositions~\ref{prop: local in time bound for Boussinesq}, \ref{prop: local in time bound for KdV} and \ref{prop: local in time bound for auxiliary equation}, the solutions $u_\ep^\pm(t)$, $v_\ep^\pm(t)$ and $w^\pm(t)$ exist on the common time interval $[-T,T]$, where $v_\ep^\pm(t)$ is defined by \eqref{eq: auxiliary equation} and
\begin{equation}\label{eq: choice of T for local KdV limit}
T
:=
\min\left\{
\left(\frac{c_0}{R_0}\right)^{1/(\frac34-b)},
\,\frac{1}{2}
\right\},
\end{equation}
for a sufficiently small constant $c_0>0$ to be chosen later. Moreover, they satisfy
\begin{equation}\label{ineq: uniform bound for local KdV limit}
\|u_\ep^\pm\|_{X_{\ep,\pm}^{0,b}},
\,
\|v_\ep^\pm\|_{X_{\ep,\pm}^{0,b}},
\,
\|w^\pm\|_{X_{\pm}^{0,b}}
\ls
R_0,
\qquad
\textup{and}
\qquad
\|u_\ep^\pm\|_{X_{\ep,\pm}^{s,b}},
\,
\|v_\ep^\pm\|_{X_{\ep,\pm}^{s,b}},
\,
\|w^\pm\|_{X_{\pm}^{s,b}}
\ls
R_s.
\end{equation}

We estimate the differences $u_\ep^\pm-v_\ep^\pm$ and $v_\ep^\pm-w^\pm$ separately. We first consider $u_\ep^\pm-v_\ep^\pm$. Inserting smooth time cut-offs $\eta_1$ and $\eta_T$ defined in \eqref{eq: smooth cutoff}, we write
$$
\begin{aligned}
u_\ep^\pm(t)-v_\ep^\pm(t)
&=
\eta_1(t)S_\ep^\pm(t)(1-P_{\leq N_0}) u_{\ep,0}^\pm\\
&\quad\pm
\frac{\eta_1(t)}{2}
\int_0^tS_\ep^\pm(t-t')\eta_T(t')
\frac{\pa_x}{\la\ep\pa_x\ra}
\big\{(u_\ep^\pm)^2-(v_\ep^\pm)^2\big\}(t')\, dt'\\
&\quad\pm
\frac{\eta_1(t)}{2}
\int_0^tS_\ep^\pm(t-t')\eta_T(t')
\frac{\pa_x}{\la\ep\pa_x\ra}
(1-P_{\leq N_0})(v_\ep^\pm)^2(t')\, dt'\\
&\quad
\pm
\frac{\eta_1(t)}{2}
\int_0^tS_\ep^\pm(t-t')\eta_T(t')
\frac{\pa_x}{\la\ep\pa_x\ra}
\big\{2u_\ep^\pm e^{\pm\frac{2t'}{\ep^2}\pa_x}u_\ep^\mp
+e^{\pm\frac{2t'}{\ep^2}\pa_x}(u_\ep^\mp)^2\big\}(t')\, dt'.
\end{aligned}
$$
The linear estimates in Lemma~\ref{lem: properties of Bourgain norm} yield
$$
\begin{aligned}
&\|u_\ep^\pm-v_\ep^\pm\|_{X_{\ep,\pm}^{0,b}}\\
&\ls
\|(1-P_{\leq N_0})u_{\ep,0}^\pm\|_{L_x^2}
+
\bigg\|
\eta_T\frac{\pa_x}{\la\ep\pa_x\ra}
\big\{(u_\ep^\pm+v_\ep^\pm)(u_\ep^\pm-v_\ep^\pm)\big\}
\bigg\|_{X_{\ep,\pm}^{0,b-1}} \\
&\quad
+
\bigg\|\eta_T\frac{\pa_x}{\la\ep\pa_x\ra}
(1-P_{\leq N_0})(v_\ep^\pm)^2\bigg\|_{X_{\ep,\pm}^{0,b-1}}
+
\bigg\|
\eta_T\frac{\pa_x}{\la\ep\pa_x\ra}
\big\{2u_\ep^\pm e^{\pm\frac{2t}{\ep^2}\pa_x}u_\ep^\mp
+e^{\pm\frac{2t}{\ep^2}\pa_x}(u_\ep^\mp)^2\big\}
\bigg\|_{X_{\ep,\pm}^{0,b-1}} \\
&\ls
N_0^{-s}\|u_{\ep,0}^\pm\|_{H_x^s}
+T^{\frac34-b}
\bigg\|
\frac{\pa_x}{\la\ep\pa_x\ra}
\big\{(u_\ep^\pm+v_\ep^\pm)(u_\ep^\pm-v_\ep^\pm)\big\}
\bigg\|_{X_{\ep,\pm}^{0,-\frac14}}
+T^{\frac34-b}N_0^{-s}
\bigg\|
\frac{\pa_x}{\la\ep\pa_x\ra}
(v_\ep^\pm)^2
\bigg\|_{X_{\ep,\pm}^{s,-\frac14}}
\\
&\quad
+T^{\frac34-b}
\bigg\|
\frac{\pa_x}{\la\ep\pa_x\ra}
\big\{u_\ep^\pm (e^{\pm\frac{2t}{\ep^2}\pa_x}u_\ep^\mp)\big\}
\bigg\|_{X_{\ep,\pm}^{0,-\frac14}}
+T^{\frac34-b}
\bigg\|
\frac{\pa_x}{\la\ep\pa_x\ra}
\big\{e^{\pm\frac{2t}{\ep^2}\pa_x}(u_\ep^\mp)^2\big\}
\bigg\|_{X_{\ep,\pm}^{0,-\frac14}}.
\end{aligned}
$$
Then, applying the bilinear estimates in Lemma~\ref{lem: bilinear estimates} and using the uniform bounds \eqref{ineq: uniform bound for local KdV limit}, we obtain
$$
\begin{aligned}
\|u_\ep^\pm-v_\ep^\pm\|_{X_{\ep,\pm}^{0,b}}
&\ls
N_0^{-s}\|u_{\ep,0}^\pm\|_{H_x^s}
+T^{\frac34-b}
\big(
\|u_\ep^\pm\|_{X_{\ep,\pm}^{0,b}}
+\|v_\ep^\pm\|_{X_{\ep,\pm}^{0,b}}
\big)
\|u_\ep^\pm-v_\ep^\pm\|_{X_{\ep,\pm}^{0,b}} \\
&\quad 
+T^{\frac34-b}N_0^{-s}\|v_\ep^\pm\|_{X_{\ep,\pm}^{s,b}}\|v_\ep^\pm\|_{X_{\ep,\pm}^{0,b}}\\
&\quad
+T^{\frac34-b}\ep^{\min\{s,1\}}
\big(
\|u_\ep^\pm\|_{X_{\ep,\pm}^{s,b}}
\|u_\ep^\mp\|_{X_{\ep,\mp}^{0,b}}
+
\|u_\ep^\pm\|_{X_{\ep,\pm}^{0,b}}
\|u_\ep^\mp\|_{X_{\ep,\mp}^{s,b}}
\big)\\
&\quad
+T^{\frac34-b}\ep^{\min\{s,\frac12\}}
\|u_\ep^\mp\|_{X_{\ep,\mp}^{s,b}}
\|u_\ep^\mp\|_{X_{\ep,\mp}^{0,b}}
\\
&\ls
\ep^{\frac{2s}{5}}R_s
+
c_0\|u_\ep^\pm-v_\ep^\pm\|_{X_{\ep,\pm}^{0,b}}
+
c_0\ep^{\frac{2s}{5}}R_s
+
c_0\ep^{\min\{s,1\}}R_s
+
c_0\ep^{\min\{s,\frac12\}}R_s.
\end{aligned}
$$
Here, we used the choice of $T$ in \eqref{eq: choice of T for local KdV limit} and $N_0=\frac12\ep^{-\frac25}$. Therefore, choosing $c_0>0$ sufficiently small (but independently of $\epsilon\in(0,1]$) and absorbing the second term on the right-hand side, we obtain
\begin{equation}\label{ineq: u-v estimate}
\|u_\ep^\pm-v_\ep^\pm\|_{X_{\ep,\pm}^{0,b}}
\ls
\ep^{\min\{\frac{2s}{5},\frac12\}}R_s.
\end{equation}

Next, we shall estimate $v_\ep^\pm-w^\pm$. As before, we again insert smooth temporal cut-offs.
$$
\begin{aligned}
\|v_\ep^\pm-w^\pm\|_{X_{\pm}^{0,b}}
&
\leq
\|\eta_1(1-P_{\leq N_0})w^\pm\|_{X_\pm^{0,b}}
+
\|\eta_1(S_\ep^\pm(t)-S^\pm(t))P_{\leq N_0}u_{\ep,0}^\pm\|_{X_\pm^{0,b}}\\
& \quad +
\|\eta_1S^\pm(t) P_{\leq N_0}(u_{\ep,0}^\pm-w_0^\pm)\|_{X_\pm^{0,b}} \\
&\quad +
\bigg\|
\frac{\eta_1}{2}\int_0^t
S_\ep^\pm(t-t')\eta_T(t')\frac{\pa_x}{\la\ep\pa_x\ra}
P_{\leq N_0}\{(v_\ep^\pm)^2-(w^\pm)^2\}(t') \, dt'
\bigg\|_{X_\pm^{0,b}} \\
&\quad +
\bigg\|
\frac{\eta_1}{2}\int_0^t
(S_\ep^\pm(t-t')-S^\pm(t-t'))\frac{\pa_x}{\la\ep\pa_x\ra}(\eta_1\eta_T)(t')P_{\leq N_0}\{(w^\pm)^2(t')\}\, dt'
\bigg\|_{X_\pm^{0,b}} \\
&\quad +
\bigg\|
\frac{\eta_1}{2}\int_0^t
S^\pm(t-t')\eta_T(t')\bigg(1-\frac{1}{\la\ep\pa_x\ra}\bigg)\pa_xP_{\leq N_0}\{(w^\pm)^2(t')\}\, dt'
\bigg\|_{X_\pm^{0,b}} \\
&
=:\textup{(I)} + \textup{(II)} + \textup{(III)} + \textup{(IV)} + \textup{(V)} + \textup{(VI)}.
\end{aligned}
$$
We group the terms according to analogous estimates and treat each group together. By the linear estimates in Lemma~\ref{lem: properties of Bourgain norm} and uniform bounds \eqref{ineq: uniform bound for local KdV limit} with $N_0=\frac{1}{2}\ep^{-\frac{2}{5}}$, we obtain
\begin{equation}\label{ineq: local KdV limit estimate 1}
\begin{aligned}
\textup{(I)}+\textup{(III)}
&\ls
\|(1-P_{\leq N_0})w^\pm\|_{X_\pm^{0,b}}
+
\|P_{\leq N_0}(u_{\ep,0}^\pm-w_0^\pm)\|_{L_x^2}\\
&\ls
N_0^{-s}\|w^\pm\|_{X_\pm^{s,b}}+\|u_{\ep,0}^\pm-w_0^\pm\|_{L_x^2}
\ls
\ep^{\frac{2s}{5}}R_s+\|u_{\ep,0}^\pm-w_0^\pm\|_{L_x^2}.
\end{aligned}
\end{equation}
On the other hand, applying Lemma~\ref{lem: Low frequency comparison estimates}, linear estimates in Lemma~\ref{lem: properties of Bourgain norm}, and the choice of $T$ in \eqref{eq: choice of T for local KdV limit} with uniform bounds \eqref{ineq: uniform bound for local KdV limit}, we have
\begin{equation}\label{ineq: local KdV limit estimate 2}
\begin{aligned}
\textup{(II)}+\textup{(V)}
&\ls
\ep^{\frac{2s}{5}}\|u_{\ep,0}^\pm\|_{H_x^s}
+
\ep^{\frac{2s}{5}}\bigg\|\eta_T\frac{\pa_x}{\la\ep\pa_x\ra}(w^\pm)^2\bigg\|_{X_\pm^{s,b-1}}\\
&\ls 
\ep^{\frac{2s}{5}}\|u_{\ep,0}^\pm\|_{H_x^s}
+
\ep^{\frac{2s}{5}}T^{\frac{3}{4}-b}\|\pa_x(w^\pm)^2\|_{X_\pm^{s,-\frac{1}{4}}}\\
&
\ls
\ep^{\frac{2s}{5}}\|u_{\ep,0}^\pm\|_{H_x^s}
+
\ep^{\frac{2s}{5}}T^{\frac{3}{4}-b}\|w^\pm\|_{X_\pm^{0,b}}\|w^\pm\|_{X_\pm^{s,b}}
\ls \ep^{\frac{2s}{5}}R_s+\ep^{\frac{2s}{5}}c_0R_s.
\end{aligned}
\end{equation}
For the remaining terms, using \eqref{ineq: low freq est 1} and the inhomogeneous estimate in Lemma~\ref{lem: properties of Bourgain norm} with the fact that $1-\frac{1}{\la\ep\xi\ra}=\frac{\ep^2\xi^2}{\la\ep\xi\ra(1+\la\ep\xi\ra)}\ls (\ep|\xi|)^{\frac{2s}{5}}\ls \ep^{\frac{2s}{5}}\la\xi\ra^s$ when $|\xi|\leq N_0=\frac{1}{2}\ep^{-\frac{2}{5}}$, we get
$$
\textup{(IV)}+\textup{(VI)}
\ls
\bigg\|
\eta_T\frac{\pa_x}{\la\ep\pa_x\ra}\{(v_\ep^\pm)^2-(w^\pm)^2\}
\bigg\|_{X_\pm^{0,b-1}}
+
\ep^{\frac{2s}{5}}
\|\eta_T\pa_x\{(w^\pm)^2\}\|_{X_\pm^{s,b-1}}.
$$
Then, the linear estimates in Lemma~\ref{lem: properties of Bourgain norm} and the bilinear estimate for KdV \eqref{ineq: bilin est for KdV} imply
\begin{equation}\label{ineq: local KdV limit estimate 3}
\begin{aligned}
\textup{(IV)}+\textup{(VI)}
&\ls
\bigg\|
\eta_T\frac{\pa_x}{\la\ep\pa_x\ra}\{(v_\ep^\pm)^2-(w^\pm)^2\}
\bigg\|_{X_\pm^{0,b-1}}
+
\ep^{\frac{2s}{5}}
\|\eta_T\pa_x\{(w^\pm)^2\}\|_{X_\pm^{s,b-1}} \\
&
\ls
T^{\frac{3}{4}-b}\|\pa_x\{(v_\ep^\pm)^2-(w^\pm)^2\}\|_{X_\pm^{0,-\frac{1}{4}}}
+
\ep^{\frac{2s}{5}}T^{\frac{3}{4}-b}\|\pa_x\{(w^\pm)^2\}\|_{X_\pm^{s,-\frac{1}{4}}}\\
&
\ls
T^{\frac{3}{4}-b}
\big(
\|v_\ep^\pm\|_{X_\pm^{0,b}}+\|w^\pm\|_{X_\pm^{0,b}}
\big)
\|v_\ep^\pm-w^\pm\|_{X_\pm^{0,b}}
+
\ep^{\frac{2s}{5}}T^{\frac{3}{4}-b}\|w^\pm\|_{X_\pm^{0,b}}\|w^\pm\|_{X_\pm^{s,b}}\\
&
\ls c_0\|v_\ep^\pm-w^\pm\|_{X_\pm^{0,b}}+\ep^{\frac{2s}{5}}R_s,
\end{aligned}
\end{equation}
where in the last inequality, we used uniform bounds \eqref{ineq: uniform bound for local KdV limit} and the definition of $T$ in \eqref{eq: choice of T for local KdV limit}. Hence, combining \eqref{ineq: local KdV limit estimate 1}--\eqref{ineq: local KdV limit estimate 3}, we conclude that
\begin{equation}\label{ineq: v-w estimate}
\|v_\ep^\pm-w^\pm\|_{X_{\pm}^{0,b}}
\ls \|u_{\ep,0}^\pm-w_0^\pm\|_{L_x^2} + \ep^{\frac{2s}{5}}R_s,
\end{equation}
after choosing $c_0>0$ sufficiently small. The proposition follows from \eqref{ineq: u-v estimate} and \eqref{ineq: v-w estimate}.
\end{proof}

\subsection{Global extension}\label{sec: Global Extension}

In this section, we complete the proof of the main theorem by iterating the local KdV approximation established in Proposition~\ref{prop: KdV to Boussinesq local result}. The key point is that the lifespan in Proposition~\ref{prop: KdV to Boussinesq local result} depends only on the lower $L^2$ norm, rather than the higher $H^s$ norm. Since the $L^2$ norm is conserved for the KdV equation and remains uniformly bounded for the rescaled Boussinesq system by Lemma~\ref{lem: modified L2 conservation}, the local approximation can be iterated with a uniform time step. Combining Proposition~\ref{prop: Hs norm bound} with Proposition~\ref{prop: KdV to Boussinesq local result}, we obtain the main theorem.

\begin{proof}[Proof of Theorem~\ref{thm: main theorem}]
It suffices to consider the case $t\ge0$, since the argument for $t\le0$ is identical. Throughout the proof, we fix $R_0,R_s>0$ satisfying \eqref{ineq: R0, Rs assumption}. By Proposition~\ref{prop: Hs norm bound}, for all sufficiently small $\ep\in(0,1]$, the exponential bound \eqref{ineq: Hs norm bound} holds for all $t\ge0$.

Applying Proposition~\ref{prop: KdV to Boussinesq local result}, the local approximation can be iterated with a time step
$$
\Delta t=\Delta t(R_0)>0,
$$
independent of $\ep$, $R_s$, and the initial time. Enlarging the constant if necessary, there exists $C=C(R_0,s)\geq2$ such that for every $\tau\geq0$,
$$
\|u_\ep^\pm-w^\pm\|_{C_t([\tau,\tau+\Delta t];L_x^2)}
\leq
C\|u_\ep^\pm(\tau)-w^\pm(\tau)\|_{L_x^2}
+
Ce^{C\tau}R_s\ep^{\min\{\frac{2s}{5},\frac12\}}.
$$
Let $t_n:=n\Delta t$, $n\in\mathbb Z_{\geq0}$.
Applying the previous estimate with $\tau=t_n$, we obtain
\begin{equation}\label{ineq: one step iteration}
\|u_\ep^\pm-w^\pm\|_{C_t([t_n,t_{n+1}];L_x^2)}
\leq
C\|u_\ep^\pm(t_n)-w^\pm(t_n)\|_{L_x^2}
+
Ce^{Ct_n}R_s
\ep^{\min\{\frac{2s}{5},\frac12\}}.
\end{equation}
Then, iterating \eqref{ineq: one step iteration}, we obtain, for every integer $m\geq 1$,
$$
\|u_\ep^\pm-w^\pm\|_{C_t([0,t_m];L_x^2)}
\leq
C^m
\|u_{\ep,0}^\pm-w_0^\pm\|_{L_x^2}
+
CR_s
\ep^{\min\{\frac{2s}{5},\frac12\}}
\sum_{k=0}^{m-1}
C^{m-k}
e^{Ct_k}.
$$
Choose $K=K(R_0,s)>0$ sufficiently large so that
$$
C(C+e^{C\Delta t})
\leq
e^{K\Delta t}.
$$
Since $t_k=k\Delta t$, we have $C^m\leq e^{Kt_m}$ and
$$
C\sum_{k=0}^{m-1}
C^{m-k}e^{Ct_k}
=
C\sum_{k=0}^{m-1}C^{m-k}
(e^{C\Delta t})^k
\leq
C(C+e^{C\Delta t})^m
\leq
e^{Kt_m}.
$$
Therefore,
$$
\|u_\ep^\pm-w^\pm\|_{C_t([0,t_m];L_x^2)}
\leq
e^{Kt_m}
\left(
\|u_{\ep,0}^\pm-w_0^\pm\|_{L_x^2}
+
R_s
\ep^{\min\{\frac{2s}{5},\frac12\}}
\right).
$$
Given $t\geq0$, choose $m\in\Z_{\geq 1}$ such that $t\leq t_m\leq t+\Delta t$. It follows that
$$
\|u_\ep^\pm(t)-w^\pm(t)\|_{L_x^2}
\leq
e^{Kt}
e^{K\Delta t}
\left(
\|u_{\ep,0}^\pm-w_0^\pm\|_{L_x^2}
+
R_s
\ep^{\min\{\frac{2s}{5},\frac12\}}
\right).
$$
Finally, set
$$
K_0:=\max\{K, e^{K\Delta t}\}, \qquad K_s:=K_0R_s.
$$
By construction, $K_0$ depends only on $R_0$ and $s$, whereas $K_s$ may additionally depend on $R_s$. Hence, the preceding estimate yields \eqref{eq: convergence bound}, as we desired.
\end{proof}

\bibliographystyle{abbrv}
\bibliography{Reference}

\end{document}